\newlist{steps}{enumerate}{1}
\setlist[steps]{label=\underline{Step \arabic*:},ref=\arabic*,wide,nosep}
\numberwithin{equation}{section}
\declaretheorem[name=Theorem,
	refname={theorem,theorems},
	Refname={Theorem,Theorems},
	numberwithin=section
	]{theorem}
\declaretheorem[name=Question,
	refname={question,questions},
	Refname={Question,Questions},
	numberwithin=section
	]{question}
\declaretheorem[name=Proposition,
	refname={proposition, propositions},
	Refname={Proposition, Propositions},
	sibling=theorem]{proposition}
\declaretheorem[name=Lemma,
	refname={lemma,lemmas},
	Refname={Lemma,Lemmas},
	sibling=theorem]{lemma}
\declaretheorem[name=Remark,
	refname={remark,remarks},
	Refname={Remark,Remarks},
	style=remark,
	sibling=theorem]{remark}
\declaretheorem[name=Definition,
	refname={definition,definitions},
	Refname={Definition,Definitions},
	style=remark,
	sibling=theorem]{definition}
\declaretheorem[name=Example,
	refname={example,examples},
	Refname={Example,Examples},
	sibling=theorem,
	style=remark]{example}
\newcounter{claim}[theorem]
\renewcommand{\vec}[1]{\mathbf{#1}}
\newcommand{\CC}{\mathbb{C}}
\newcommand{\ZZ}{\mathbb{Z}}
\newcommand{\NN}{\mathbb{N}}
\newcommand{\RR}{\mathbb{R}}
\newcommand{\PP}{\mathbb{P}}
\DeclareMathOperator{\shRHom}{R\mathcal{H}\mathit{om}}
\DeclareMathOperator{\Ch}{Ch}
\newcommand{\an}{\mathrm{an}}
\newcommand{\gr}{\mathrm{gr}}
\DeclareMathOperator{\Irr}{\sh{I}\mathit{rr}}
\DeclareMathOperator{\init}{in}
\DeclareMathOperator{\Div}{Div}
\DeclareMathOperator{\Supp}{Supp}
\DeclareMathOperator{\rank}{rank}
\DeclareMathOperator{\im}{im}
\DeclareMathOperator{\ord}{ord}
\DeclareMathOperator{\Sing}{Sing}
\DeclareMathOperator{\irrmult}{\mu^{irr}}
\DeclareMathOperator{\irrdiv}{ID}
\newcommand{\uppercaseFactory}[2]{
    \foreach \x in {A,...,Z}{
        \expandafter\xdef\csname #1\x\endcsname{#2{\x}}}}
\newcommand{\lowercaseFactory}[2]{
    \foreach \x in {a,...,z}{
        \expandafter\xdef\csname #1\x\endcsname{#2{\x}}}}
\newcommand{\sh}{\mathcal}
\title{A new formulation of regular singularity}
\author{Avi Steiner}
\date{\today}
\begin{document}

\maketitle

\begin{abstract}
    We provide an alternative definition for the familiar concept of regular singularity for meromorphic connections. Our new formulation does not use derived categories, and it also avoids the necessity of finding a special good filtration as in the formulation due to Kashiwara--Kawai. Moreover, our formulation provides an explicit algorithm to decide the regular singularity of a meromorphic connection. An important intermediary result, interesting in its own right, is that taking associated graded modules with respect to (not necessarily canonical) $V$-filtrations commutes with non-characteristic restriction. This allows us to reduce the proof of the equivalence of our formulation with the classical concept to the one-dimensional case. In that situation, we extend the well-known one-dimensional Fuchs criterion for ideals in the Weyl algebra to arbitrary holonomic modules over the Weyl algebra equipped with an arbitrary $(-1,1)$-filtration.%In order to prove that our new definition is equivalent to the already established definitions, we reduce to the one-dimensional case 
    %We provide an equivalent definition of regular singularity for meromorphic connections that is much better adapted for checking on a computer than the other common definitions. We do this by proving a higher-dimensional analog of Fuchs' criterion. This analog is a (partial) converse to a result of Saito--Sturmfels--Takayama on the constancy of holonomic rank under Groebner deformation. A secondary aim in writing this article is to provide a discussion of regularity which is approachable to those who may not be comfortable with the full derived category machinery of $D$-module theory.
\end{abstract}

%%%%%%%%%%%%%%%%%%%%%%%%%%%%%%%%%%%%%%%%%%%%%%
\section{Introduction}
%%%%%%%%%%%%%%%%%%%%%%%%%%%%%%%%%%%%%%%%%%%%%%
% \todo{\begin{enumerate}
%     \item Definition of meromorphic connection approachable to non-experts (e.g.\ physicists)
%     \item Many examples, from many different fields
%     \item Examples of non-meromorphic things where vanishing of $\Irr_Y$ implies regularity
% \end{enumerate}}

Fuchs famously proved in 1866 that, for a linear ordinary differential operator $P=\sum_{i=0}^r a_i(x)\partial^i$ ($a_i$ holomorphic near $x=p$ with $a_r$ not identically zero), the following conditions are equivalent (see, e.g., \cite[Th.~5.1.5]{htt} or \cite[Th.~1.4.18]{sst} for a precise statement in the context of $D$-modules, or \cite[\S15.3]{ince} for a proof):
\begin{enumerate}[label=\textnormal{(\alph*)}]
    \item $\ord_{x=p}(a_i/a_r)\geq -(r-i)$ for $i=0,\ldots,r$.
    \item Every multivalued solution near $p$ to the ordinary differential equation $Pu=0$ can be expressed as a linear combination of functions of the form
    \[x^\lambda g(x)(\log x)^k,\]
    where $\lambda\in \CC$ and $g(x)$ is holomorphic near $p$.
\end{enumerate}
A linear ordinary differential operator $P$ which satisfies these equivalent conditions is said to be \emph{regular singular at $p$}.
It was later realized by Deligne \cite{deligneRH} that this regularity condition (and its higher-dimensional generalization) has significant geometric meaning, and is exactly the condition needed to relate the so-called de Rham cohomology of a flat vector bundle $\shE$ on a complex \emph{algebraic} variety $X$ with the cohomology of $X$ with coefficients in the horizontal sections of $\shE$. See \cite{Meb04} for a thorough discussion of the history of regularity in the algebro-geometric setting. 

Various authors have generalized regularity to higher dimensions, such as Deligne \cite{deligneRH}, Mebkhout \cite{Meb89}, and Kashiwara--Kawai \cite{KKshort} (see also the presentation in \cite[\S\S5-6]{htt}). However, although these generalizations are equivalent, they are all non-algorithmic. It is therefore natural to ask for an alternative (equivalent) definition of regularity which \emph{is} algorithmic. We accomplish this goal in \Cref{th:const-implies-rh}. Our definition has the advantage of not using derived categories, and avoids the necessity of finding a special good filtration as in \cite{KKshort}.

% Our presentation is geared towards those who have some knowledge of $D$-modules (e.g.\ as presented in \cite[Ch.~1]{sst}) but who may not be very familiar with regularity. 
In certain instances, we will need results from more advanced sources such as \cite{htt}, but we are for the most part able to avoid the formalism of derived categories except in very specific instances. Various key concepts and notation, such as $\Irr_Y(\shM^\an)$, $\Ch(\shM)$, $T^*_XY$, local coordinates, good filtrations,etc., are defined in \S\ref{sec:notation}.

To motivate our approach, we recall the following theorem, which is essentially \cite[Th.~2.5.1]{sst}:

\begin{theorem}[{\cite[Th.~2.5.1]{sst}}]\label{th:rh-implies-const}
    Let $X$ be a smooth variety, $\mathcal{M}$ be a regular holonomic $\shD_X$-module, $p\in X$, and let $x_1,\ldots,x_n$ be local coordinates centered at $p$. Let $I$ be any ideal of the $n$th Weyl algebra $D_n=\CC\langle x,\partial\rangle$ such that $\mathcal{M}\cong \mathcal{D}_X/\mathcal{D}_X I$ on a neighborhood of $p$.\footnote{Such an ideal exists because $\shM$ is holonomic, see \cite[Th.~2.5]{coutinho}.} Then for all weights $w\in \RR^n$,
    \begin{equation}\label{eq:grobdef-rank}
        \rank\left(D_n/D_n\cdot \init_{(-w,w)}(I)\right) = \rank(\mathcal{M}).
    \end{equation}
\end{theorem}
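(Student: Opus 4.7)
The plan is to realize $D_n/D_n\init_{(-w,w)}(I)$ as the special fiber of a one-parameter flat degeneration of $\shM$, and then to show that generic and special fibers share the same rank when $\shM$ is regular holonomic.

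First, I would set up the Gröbner deformation. Let $V_\bullet^w D_n$ denote the increasing filtration of the Weyl algebra in which $x_i$ has order $-w_i$ and $\partial_i$ has order $w_i$, and form the Rees algebra
\[
\widetilde{D}_n^w \;=\; \bigoplus_{k} V_k^w D_n \cdot t^k \;\subset\; D_n[t,t^{-1}].
\]
The homogenization $\widetilde{I}$ of $I$ inside $\widetilde{D}_n^w$ is $\CC[t]$-flat: its fiber at $t = c \in \CC^*$ is isomorphic to $D_n/I$ via the $\CC^*$-action $(x_i,\partial_i)\mapsto (c^{-w_i}x_i,\, c^{w_i}\partial_i)$, while its fiber at $t=0$ equals $D_n/\init_{(-w,w)}(I)$. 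This produces a flat family over $\AA^1$ interpolating between $\shM$ and $D_n/D_n\init_{(-w,w)}(I)$.

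Second, I would establish the general inequality $\rank(D_n/D_n\init_{(-w,w)}(I)) \geq \rank(\shM)$ by upper semicontinuity of rank in $\CC[t]$-flat families of $\shD$-modules: on the smooth locus of each fiber the sheaf of holomorphic horizontal sections has locally constant fiber dimension, and the singular loci patch together into a closed subvariety of the total space, so the generic rank can only jump up at the special fiber.

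For the reverse inequality, the regular holonomic assumption is essential. The comparison theorem for regular holonomic modules (see \cite[Ch.~6]{htt}) ensures that every formal solution of the special fiber at $t=0$ lifts to a convergent multivalued solution on some punctured disc around $0$, hence is already accounted for by the rank of the nearby fibers. This closes the gap and gives equality. The main obstacle is precisely this analytic lifting, which is the content of regularity. An alternative route, enabled by the paper's intermediate result that the associated graded along a (not necessarily canonical) $V$-filtration commutes with non-characteristic restriction, is to reduce via generic non-characteristic restriction to the one-dimensional case, where the classical Fuchs criterion yields the equality directly.
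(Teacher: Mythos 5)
The paper does not reprove this statement; it cites it directly from SST (Theorem 2.5.1), and the paper's original contribution concerns the converse direction. Your sketch is therefore a reconstruction of the SST argument, and it has the right overall shape: form the Rees/Gr\"obner degeneration over $\CC[t]$, get the inequality $\rank(D_n/D_n\init_{(-w,w)}(I)) \geq \rank(\shM)$ from upper semicontinuity of holonomic rank (SST Theorem 2.2.1), and then use regularity to close the gap.

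The reverse inequality is where you are vague, and where the real work lies. ``The comparison theorem for regular holonomic modules'' is not a sharp enough reference to do the job: that theorem, in either the Mebkhout or Kashiwara form, compares derived solution or de~Rham complexes and does not by itself say that solutions of $\init_{(-w,w)}(I)$ lift to convergent solutions of $I$. The precise ingredients used in SST are: (i) every solution of $\init_{(-w,w)}(I)$ is the initial term of a formal Nilsson solution of $I$ (SST Theorem 2.5.5 --- a term-by-term lift which does not use regularity); (ii) the initial-term map from $\calN_w(I)$ to solutions of $\init_{(-w,w)}(I)$ is injective (SST Proposition 2.5.7); and (iii) regularity forces the formal Nilsson solutions to converge and hence to be genuine multivalued solutions, of which there are exactly $\rank(\shM)$ --- this is the substantive step and is essentially SST Theorem 2.5.14, see also \cite[Cor.~2.4.16]{sst}. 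Items (i)--(ii) yield $\rank(D_n/D_n\init_{(-w,w)}(I)) = \dim_\CC\calN_w(I)$, and item (iii) pins that dimension to $\rank(\shM)$. So your sketch is correct in spirit, but the single sentence you devote to (iii) is standing in for the bulk of the proof.

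One caution about your proposed alternative route: reducing via a non-characteristic transverse curve plus the one-dimensional Fuchs criterion (the mechanism of \Cref{th:Irr-is-0} in this paper) works only when $p$ is a general point of a codimension-one component of $\Sing(\shM)$, since that is what guarantees a non-characteristic curve through $p$ exists. \Cref{th:rh-implies-const} is stated for arbitrary $p$; at points where several components of the singular locus meet, or where $\shM$ has a subquotient supported at $p$, a transverse curve need not be non-characteristic and the reduction fails. This is exactly why the paper proves the implication from regularity to the rank equality in \Cref{th:const-implies-rh} by citing SST rather than by its own restriction machinery, and why its own equivalence with the $\gr^U$ condition is only stated at general points of the components $Y_j$.
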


The converse is in general false: take any irregular holonomic $\mathcal{D}_X$-module supported on a proper subset of $X$. Then by the semicontinuity of holonomic rank (\cite[Th.~2.2.1]{sst}), both sides of \eqref{eq:grobdef-rank} vanish and hence are trivially equal for all $w\in \RR^n$. 

The next best thing to try, therefore, is when $\mathcal{M}$ is a meromorphic connection (see \S\ref{sec:mero} for the definition). The goal of this article is to prove the converse of \Cref{th:rh-implies-const} in this case, which for convenience we state as an equivalence. Below, \[V_\bullet^{\{p\}}\shD_X \coloneqq \Set{P\in \shD_X | P(\frm_p^i) \subseteq \frm_p^{i-\bullet}\text{ for all }i}\]
and $\frm_p$ is the maximal ideal at $p$.

\begin{theorem}\label{th:const-implies-rh}
    Let $X$ be a smooth \emph{complete} variety. Let $\mathcal{M}$ be a meromorphic connection on $X$. The following are equivalent:
    \begin{enumerate}[label=\textnormal{(\alph*)}]
        \item\label{th:const-implies-rh.reg} $\mathcal{M}$ is regular.
        \item\label{th:const-implies-rh.Dn} For all $p\in X$, there exist local coordinates $x_1,\ldots,x_n$ centered at $p$ and an ideal $I$ of $D_n$ with $\mathcal{M}\cong \mathcal{D}_X/\mathcal{D}_X I$ on a neighborhood of $p$ such that for all weights $w\in \RR^n$, 
        \begin{equation}\label{eq:comparison}
            \rank\left(D_n/\init_{(-w,w)}(I)\right) = \rank(\mathcal{M}).
        \end{equation}
        \item\label{th:const-implies-rh.irreds} Let $Y_1,\ldots,Y_r$ be the irreducible components of the pole divisor of $\shM$. For all $j$, for a general point $p\in Y_j$, and for any (equivalently some) $V^{\{p\}}_\bullet\shD_X$-good filtration $U_\bullet\shM$ defined near $p$,
    \[ \rank (\gr^U(\shM)) = \rank(\shM).\]
    \end{enumerate}
\end{theorem}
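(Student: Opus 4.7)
The plan is to prove the cycle \ref{th:const-implies-rh.reg}$\Rightarrow$\ref{th:const-implies-rh.Dn}$\Rightarrow$\ref{th:const-implies-rh.irreds}$\Rightarrow$\ref{th:const-implies-rh.reg}, with only the last implication carrying genuinely new content. The first implication is essentially immediate: a meromorphic connection is holonomic, so near any $p\in X$ one can pick local coordinates and a cyclic presentation $\shM\cong \shD_X/\shD_X I$ by the footnote to \Cref{th:rh-implies-const}, and then that theorem supplies the rank equality \eqref{eq:comparison} for every weight $w\in\RR^n$.

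For \ref{th:const-implies-rh.Dn}$\Rightarrow$\ref{th:const-implies-rh.irreds}, I would specialize to the weight $w=(1,\ldots,1)$. In local coordinates $x_1,\ldots,x_n$ centered at $p$, one checks $x_i\in V^{\{p\}}_{-1}\shD_X$ and $\partial_i\in V^{\{p\}}_{1}\shD_X$ directly from the definition of $V^{\{p\}}_\bullet$, so $\init_{(-w,w)}(\,\cdot\,)$ is precisely the principal symbol with respect to the $V$-filtration along $p$. The image of $V^{\{p\}}_\bullet D_n$ in $\shM=\shD_X/\shD_X I$ is then a $V^{\{p\}}_\bullet\shD_X$-good filtration whose associated graded is $D_n/D_n\cdot\init_{(-w,w)}(I)$, yielding the rank equality in \ref{th:const-implies-rh.irreds} for this choice of good filtration. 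The parenthetical ``equivalently some'' is justified by the standard fact that the rank of $\gr^U\shM$ as a $\gr^V\shD_X$-module is independent of the choice of good $V$-filtration.

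The substantive content is \ref{th:const-implies-rh.irreds}$\Rightarrow$\ref{th:const-implies-rh.reg}. The strategy is to reduce to $\dim X=1$. By the Kashiwara--Kawai curve test for regularity (cf.~\cite[\S6.1]{htt}) together with the fact that the irregularity of a meromorphic connection is supported on its pole divisor, it suffices to show that for each irreducible component $Y_j$ of the pole divisor, the pullback $i^*\shM$ along a generic transverse curve germ $i\colon C\hookrightarrow X$ meeting $Y_j$ at a general point $p$ is regular at $p\cap C$. Invoking the intermediary result advertised in the abstract---that taking associated graded with respect to a (not necessarily canonical) good $V$-filtration commutes with non-characteristic restriction---the hypothesis \ref{th:const-implies-rh.irreds} for $\shM$ along $Y_j$ at $p$ transports directly to the analogous rank equality for $i^*\shM$ at $p\cap C$. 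On the curve I would then choose a cyclic presentation $i^*\shM\cong D_1/D_1 J$; the rank equality becomes $\rank(D_1/D_1\cdot\init_{(-1,1)}(J))=\rank(i^*\shM)$, which by the paper's extension of the Fuchs criterion to arbitrary holonomic $D_1$-modules equipped with arbitrary $(-1,1)$-filtrations forces regularity of $i^*\shM$.

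The main obstacle is the reduction step: proving commutation of $\gr^U$ with non-characteristic pullback for \emph{arbitrary} good $V$-filtrations, not merely the canonical Kashiwara--Malgrange one. One must verify that transversality of $C$ to $Y_j$ ensures the restriction of a $V^{\{p\}}$-good filtration on $\shM$ is a $V^{\{p\cap C\}}$-good filtration on $i^*\shM$, and that the symbol map intertwines the two. The extended Fuchs criterion is also delicate: the classical statement concerns the standard order filtration on $D_1$, and generalizing to arbitrary $(-1,1)$-filtrations of arbitrary holonomic $D_1$-modules requires analyzing how shifts and rescalings interact with the structure of formal and convergent solutions.
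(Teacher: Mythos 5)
Your plan is sound and uses the same ingredients the paper employs: you re-prove the implication \ref{th:const-implies-rh.irreds}$\Rightarrow$\ref{th:const-implies-rh.reg} by the very reduction the paper packages into \Cref{th:Irr-is-0} (restrict to a transverse curve at a general point of each pole component, invoke commutation of $\gr^U$ with non-characteristic restriction via \Cref{th:non-char-gr}, and apply the extended Fuchs criterion \Cref{th:Fuchs-for-mods}), while your \ref{th:const-implies-rh.reg}$\Rightarrow$\ref{th:const-implies-rh.Dn} and \ref{th:const-implies-rh.Dn}$\Rightarrow$\ref{th:const-implies-rh.irreds} match the paper's arguments essentially verbatim. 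The only structural difference is that the paper cites \Cref{th:Irr-is-0} wholesale for the equivalence \ref{th:const-implies-rh.reg}$\iff$\ref{th:const-implies-rh.irreds} rather than inlining the hard direction; also, your detour through a cyclic presentation $D_1/D_1J$ on the curve is harmless but unnecessary, since \Cref{th:Fuchs-for-mods} already accepts the inherited (non-cyclic) good $V$-filtration directly.
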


Actually, we are going to prove the following stronger statement, of which \Cref{th:const-implies-rh} will be a corollary. See \eqref{eq:M(*)} for the definition of $\shM(*Y)$.

\begin{theorem}\label{th:Irr-is-0}
    Let $\shM$ be a holonomic $\shD_X$-module, $Y\subseteq X$ a hypersurface containing $\Sing(\shM)$. Let $\{S_j\}$ be a stratification of $Y$ such that $\Ch(\shM)\cup \Ch(\shM(*Y))$ is contained in $T^*_XX\cup \bigcup_j \overline{T^*_{S_j}X}$. Then $\Irr_Y(\shM)$ has no cohomology if and only if for each $S_j$ with $\dim S_j=\dim Y$, for any (equivalently some) $p\in S_j$, and for any (equivalently some) $V^{\{p\}}_\bullet\shD_X$-good filtration $U_\bullet\shM$ defined near $p$,
    \[ \rank (\gr^U(\shM)) = \rank(\shM).\]
\end{theorem}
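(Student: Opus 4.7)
The plan is to reduce the statement from arbitrary dimension to the one-dimensional (curve) case via non-characteristic restriction, after localizing on top-dimensional strata of $Y$.

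First, the hypothesis on $\Ch(\shM)\cup\Ch(\shM(*Y))$ is precisely what is needed to ensure that $\Irr_Y(\shM)$ is constructible with respect to $\{S_j\}$. Combined with the fact that the irregularity complex of a holonomic module is perverse (up to shift), the vanishing of $\Irr_Y(\shM)$ is governed by its stalks at general points of each stratum, and a standard specialization/support argument along the conormal geometry dictated by the hypothesis reduces vanishing on lower-dimensional strata to vanishing on the top-dimensional ones. So it suffices to prove, for each $S_j$ with $\dim S_j=\dim Y$ and general $p\in S_j$, that the stalk of $\Irr_Y(\shM)$ at $p$ vanishes if and only if $\rank(\gr^U\shM)=\rank(\shM)$ for some (equivalently any) $V^{\{p\}}_\bullet\shD_X$-good filtration $U_\bullet\shM$ defined near $p$.

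Second, fix such a point $p$ in a top-dimensional stratum and choose a smooth curve $C\subseteq X$ through $p$ that is transversal to $Y$. By the stratification hypothesis together with genericity of $p$, the curve $C$ is non-characteristic for both $\shM$ and $\shM(*Y)$. The classical compatibility of the irregularity functor with non-characteristic restriction then identifies the stalk of $\Irr_Y(\shM)$ at $p$ with $\Irr_{\{p\}}(\shM|_C)$, reducing the problem to detecting regularity of the curve-module $\shM|_C$ at $p$. Third, I would invoke the intermediary result flagged in the abstract: non-characteristic restriction commutes with taking the associated graded of a $V$-filtration. Concretely, if $U_\bullet\shM$ is a $V^{\{p\}}_\bullet\shD_X$-good filtration near $p$, then its pullback to $C$ is a $V^{\{p\}}_\bullet\shD_C$-good filtration on $\shM|_C$, and
\[\gr^U(\shM|_C) = (\gr^U\shM)|_C.\]
Because $C$ is non-characteristic for both $\shM$ and $\gr^U\shM$, generic ranks are preserved, so the higher-dimensional rank equality is equivalent to the one-dimensional one.

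Finally, I would conclude using the extended one-dimensional Fuchs criterion (also flagged in the abstract): for a holonomic module on the curve $C$ equipped with an arbitrary $(-1,1)$-filtration, the equality of ranks characterizes regularity at $p$, hence the vanishing of $\Irr_{\{p\}}(\shM|_C)$. The principal obstacle is the third step --- establishing that $\gr^U$ commutes with non-characteristic restriction for \emph{arbitrary} (not merely canonical) $V$-good filtrations. Handling the non-canonical case means tracking bifiltered compatibility under the flat restriction carefully. The ``any (equivalently some)'' clause then follows from the standard fact that any two $V$-good filtrations on the same module differ by a uniformly bounded shift, so once the rank equality is verified for one good filtration it holds for all.
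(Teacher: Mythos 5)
Your overall strategy matches the paper's proof almost exactly: non-characteristic restriction to a transverse curve, identification of stalks of $\Irr_Y(\shM)$ with the irregularity at a point of the restriction, commutation of $\gr^U$ with non-characteristic restriction (\Cref{th:non-char-gr}), and conclusion via the module-level Fuchs theorem (\Cref{th:Fuchs-for-mods}). The second through fourth steps are correct and are precisely what the paper does.

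The gap is in the first step. You claim that perversity of the irregularity complex plus constructibility and ``a standard specialization/support argument'' reduce the vanishing of $\Irr_Y(\shM)$ to vanishing of its stalks at generic points of top-dimensional strata. That reasoning is insufficient: a perverse sheaf on $Y$ constructible with respect to $\{S_j\}$ can perfectly well be supported on a lower-dimensional stratum (a shifted skyscraper sheaf at a point is perverse), so generic vanishing on top-dimensional strata by itself does not force global vanishing. What actually makes this reduction work is Mebkhout's theorem \cite[Th.~3.1.1]{Meb89} that the support of $\Irr_Y(\shM^\an)$ is a \emph{union of irreducible components of $Y$}. This is a positivity/purity result specific to the irregularity complex, not a consequence of perversity alone, and it is exactly what the paper invokes to justify this step. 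Your appeal to ``the conormal geometry dictated by the hypothesis'' does not supply a substitute, since the stratification hypothesis only controls the characteristic variety of $\shM$ and $\shM(*Y)$, not the dimension of the support of $\Irr_Y(\shM)$ directly. Once you replace the vague reduction with a citation of Mebkhout's result, the proof is complete. One small further imprecision: the ``any vs.\ some'' equivalence is better seen as automatic from the statement of \Cref{th:Fuchs-for-mods}, which holds for \emph{any} good $V$-filtration on the restricted curve module, rather than requiring a separate bounded-shift comparison argument.
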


We now use \Cref{th:Irr-is-0} to prove \Cref{th:const-implies-rh}.
\begin{proof}[Proof of \Cref{th:const-implies-rh}]
    (\ref{th:const-implies-rh.reg}$\iff$\ref{th:const-implies-rh.irreds}) Since $\shM$ is meromorphic with pole divisor $Y$, it is regular if and only if $\Irr_Y(\shM)$ has no cohomology (\cite[Cor.~4.3-14]{Meb04}). Now apply \Cref{th:Irr-is-0}.

    \smallskip
    (\ref{th:const-implies-rh.reg}$\implies$\ref{th:const-implies-rh.Dn}) This is \cite[Th.~2.5.1]{sst} as presented in \Cref{th:rh-implies-const}.

    \smallskip
    (\ref{th:const-implies-rh.Dn}$\implies$\ref{th:const-implies-rh.irreds}) Let $p$ be a general point of $Y_j$. Find local coordinates centered at $p$ and an ideal $I$ of $D_n$ as in the statement of \ref{th:const-implies-rh.Dn}. Equip $D_n/I$ with the $V_\bullet D_n$-filtration with respect to the weight $(-w,w)=(-1,\ldots,-1,1,\ldots,1)$. The $V_\bullet D_n$ filtration with respect to this weight is exactly $V^{\{p\}}_\bullet\shD_X \cap D_n$. So, $\shM\cong\shD_X/\shD_X I$ has the filtration induced by $V^{\{p\}}_\bullet\shD_X$, and 
    \[ \gr^U(\shM) \cong \shD_X\otimes_{D_n}\gr^U(D_n/I) \cong \shD_X\otimes_{D_n}(D_n/\init_{(-w,w)}(I)).\]
    Now use that tensoring with $\shD_X$ doesn't change the rank, and apply \ref{th:const-implies-rh.Dn}.
\end{proof}

\begin{remark}
    At first glance, \Cref{th:Irr-is-0} seems to contradict \cite[Cor.~4.9 and Th.~6.4]{nilssonAhyp}, which imply that, for an irregular $A$-hypergeometric system $\shM_A(\beta)$ equipped with the induced $V^{\{0\}}_\bullet\shD_X$-filtration, $\gr^U(\shM_A(\beta))$ and $\shM_A(\beta)$ have the same rank. The subtlety is that the origin never satisfies the genericity condition in the statement of \Cref{th:Irr-is-0}; in particular, $0$ is contained in every irreducible component of the singular locus of $\shM_A(\beta)$, which follows from \cite[Prop.~3.8 and Lem.~3.14]{SW-irreg-gkz}.
\end{remark}

% \begin{remark}
%     It is also possible to prove the results in this paper without reducing to the one-dimensional case, instead using what is known about the local formal structure of irregular meromorphic connections. However, we feel that our approach requires less background knowledge and better emphasizes computability. Unfortunately, we are not able to completely avoid formal structure considerations, as these are needed in \S\ref{sec:Fuchs}.
% \end{remark}

% \begin{remark}
%     \Cref{th:const-implies-rh} holds more generally for any coherent $\shD_X$-module $\shM$ such that for some hypersurface $Y\subseteq X$ containing $\Sing(\shM)$, the natural map $\shM\to \shM(*Y)$ is injective. Indeed, in this situation, $\shM(*Y)$ is a meromorphic connection, and $\shM$ is regular if and only if $\shM(*Y)$ is regular. \alert{Actually, I'm not 100\% sure that \Cref{th:const-implies-rh} does in fact hold for these guys, or at least that the proof is obvious. One would need to show that $\rank(\gr^U(\shM))\leq \rank(\gr^U(\shM(*Y)))$.}
% \end{remark}

%-------------------------------------------
\subsection{Outline}
%-------------------------------------------
In \S\ref{sec:notation}, we make explicit some notation and conventions that will be used throughout the article.

In \S\ref{sec:examples}, we collect various examples exhibiting the main results.

In \S\ref{sec:Fuchs}, we prove a slight generalization of Fuchs' Theorem (\cite[Th.~1.4.18]{sst}) which applies to arbitrary (holonomic) $D_1$-modules with arbitrary good $V_\bullet D_1$-filtrations rather than just cyclic ones.

In \S\ref{sec: restr-and-gr}, we prove the compatibility of non-characteristic restriction with taking associated graded modules. This will allow us, in \S\ref{sec:proof}, to reduce to the one-dimensional case, where we can use the result of \S\ref{sec:Fuchs}.

In \S\ref{sec:Nilsson}, we prove a higher-dimensional version of Fuchs' criterion, namely that a meromorphic connection on a complete variety has rank many Nilsson solutions in every direction at every point (these notions are made precise in this section).

In \S\ref{sec:algorithm}, we adapt \Cref{th:Irr-is-0} to provide an algorithm to compute the support of $\Irr_Y(\shM)$, and then an algorithm to decide whether a given meromorphic connection on a complete variety is irregular.

Finally, in \S\ref{sec:irrdiv}, we collect the main result into a statement about divisors and divisor classes.

%-------------------------------------------
\subsection{Acknowledgements}
%-------------------------------------------
We would like to thank Christine Berkesch, Mohamed Barakat, Maria-Cruz Fern\'andez-Fern\'andez, Francisco Castro-Jimenez, Anna-Laura Sattelberger, Christian Sevenheck, and Uli Walther for useful discussions; Claude Sabbah for his assistance with understanding the irregularity complex; and Saiei-Jaeyong Matsubara-Heo and Henry Dakin for their help with \S\ref{sec:Fuchs} and \S\ref{sec: restr-and-gr}, respectively.
% \begin{itemize}
%     \item Saiei
%     \item Paco
%     \item Maria-Cruz
%     \item Christian
%     \item Uli
%     \item Anna-Laura
%     \item Henry
%     \item Claude
%     \item Christine
% \end{itemize}

%%%%%%%%%%%%%%%%%%%%%%%%%%%%%%%%%%%%%%%%%%%%%%
\section{Notation and conventions}\label{sec:notation}
%%%%%%%%%%%%%%%%%%%%%%%%%%%%%%%%%%%%%%%%%%%%%%
\subsection{Varieties}
All varieties are complex and considered with the Zariski topology. Sometimes, however, we will need to consider their analytifications, in which case we will denote the analytification functor by $(-)^\an$.

The conormal bundle of a smooth variety X along a smooth subvariety $Y$ is denoted by $T^*_YX$.

\subsubsection{Local coordinates}
When we say that $x_1,\ldots,x_n$ are local coordinates centered at a point $p$ of a smooth variety $X$, we mean the following: $x_1,\ldots,x_n$ are regular functions on an affine open neighborhood $U$ of $p$ which generate the maximal ideal $\frm_p$ at $p$, there are vector fields $\partial_1,\ldots,\partial_n$ on $U$, and these satisfy
\begin{equation}
    \begin{cases}
        [\partial_i,\partial_j]=0 \text{ and } \partial_i(x_j) = \delta_{ij} \quad (i,j=1,\ldots,n),\\
        \Theta_U = \bigoplus_{i=1}^n \shO_V\partial_i.
    \end{cases}
\end{equation}
Here, $\Theta_U$ is the sheaf of vector fields on $U$. Although $\partial_1,\ldots,\partial_n$ are technically part of the data of choosing local coordinates, we will always leave them implicit. See \cite[Ch.~A.5]{htt} for more details.

\subsection{\texorpdfstring{$D$}{D}-module concepts}\label{sec:dmod-concepts}
The sheaf of linear partial differential operators on a smooth variety $X$ is denoted $\shD_X$. The $n$th Weyl algebra is denoted $D_n$. The characteristic variety of a coherent $\shD_X$-module $\shM$ is denoted $\Ch(\shM)$.

If $f\colon X\to Y$ is a morphism of smooth varieties and $\shM$ is a $\shD_Y$-module, we denote by $f^*\shM$ the $\shO$-module pullback of $\shM$ equipped with its canonical $\shD_X$-module structure. See \cite[\S1.3]{htt} for details. When $i\colon \CC^r\hookrightarrow\CC^n$ is the inclusion of a coordinate subspace and $M$ is a $D_n$-module, we abuse notation and write $i^*M$ instead of sheafifying, applying $i^*$, then taking global sections. 

If $Z\subseteq X$ is a hypersurface, denote by $\shO_X(*Z)$ the sheaf of rational functions on $X$ with poles only along $Z$, i.e.\ $\shO_X(*Z) = \bigcup_{k\in \NN} \shO_X(-kZ)$. If $\shM$ is a $\shD_X$-module, set
\begin{equation}\label{eq:M(*)}
    \shM(*Z) \coloneqq \shM\otimes_{\shO_X} \shO_X(*Z).
\end{equation}
This is canonically also a $\shD_X$-module. Since we are working in the algebraic category, $\shM(*Z)\cong j_*j^*\shM$, where $j\colon X\setminus Z\hookrightarrow X$ is inclusion. Note that this is not true in the analytic category, since $j_*j^*\shM^\an$ is not even a coherent $\shD_{X^\an}$-module.

The \emph{rank} (or \emph{holonomic rank}) of a coherent $\shD_X$-module $\shM$ is
\begin{equation}
    \rank(\shM) \coloneqq \dim_{\CC(X)} (\CC(X)\otimes_{\shO_X}\shM),
\end{equation}
where $\CC(X)$ denotes the field of rational functions of $X$. If $p$ is a general point of $X$ and $\shM\cong \shD_X/\shI$ near $p$ with $\shI$ a $\shD_X$-ideal, then 
\begin{equation}
    \rank(\shM) = \dim_\CC\Set{f\in \shO_{X^\an,p} | Pf = 0 \text{ for all }P\in \shI}.
\end{equation}
See the discussion of rank in \cite[\S1.4]{sst}, in particular \cite[Th.~1.4.19]{sst}.

The \emph{singular locus} of a coherent $\shD_X$-module $\shM$ is
\begin{equation}
    \Sing(\shM) \coloneqq \pi(\Ch(\shM)\setminus T^*_XX),
\end{equation}
where $T^*_XX$ is the zero section of the cotangent bundle of $X$, and $\pi\colon T^*X\to X$ is the bundle projection.

\subsubsection{Meromorphic connections}\label{sec:mero}
Let $X$ be a smooth variety. A \emph{meromorphic connection} $\shM$ with poles along a hypersurface $Y\subseteq X$ is a holonomic $\shD_X$-module such that, if $f$ is any local defining equation for $Y$, multiplication by $f$ acts invertibly on $\shM$, i.e.\ such that $\shM\cong \shM(*Y)$. The hypersurface $Y$ is called the \emph{pole divisor} of $\shM$. 

More concretely, a meromorphic connection is a $\shD_X$-module (\'etale-)locally given by a system of PDEs
\[\left\{
\begin{aligned}
    \frac{\partial \vec{u}}{\partial x_1} &= A_1(x)\vec{u}\\
    &\ldots\\
    \frac{\partial \vec{u}}{\partial x_n} &= A_n(x)\vec{u},\\
\end{aligned}\right.\]
where $\vec{u}$ is a vector of dependent variables, and $A_i(x)$ is a matrix of \emph{rational} functions. The pole divisor is then the union of the poles of the $A_i(x)$'s. 

Finally, one more equivalent definition of a meromorphic connection with pole divisor $Y$ is a $\shD_X$-module which is also a locally free $\shO_X(*Y)$-module of finite rank.

When $\shM$ is a meromorphic connection with pole divisor $Y$, then $\Sing(\shM)=Y$.

\subsubsection{Regularity}
There are many equivalent definitions of regularity. The definition that has worked best for us in this paper is due to Mebkhout \cite[D\'ef.~4.2-3]{Meb04}: 

Let $Y\subseteq X$ be a hypersurface, $i\colon Y\hookrightarrow X$ inclusion. The \emph{irregularity complex with respecto to $Y$} of a holonomic $\shD_{X^\an}$-module $\shM$ is\footnote{Mebkhout denotes this $\operatorname{Irr}_Y^*(\shM)$ in \cite{Meb04}.}
\begin{equation}\label{eq:alt-Irr}
    \Irr_Y(\shM) \coloneqq i^{-1}\shRHom_{\shD_{X^\an}}(\shM(*Y), \shO_{X^\an}).
\end{equation}
This is a perverse sheaf supported on $Y$ (\cite[Rem.~3.5-15]{Meb04}), however we won't need this fact. In fact, we will need the explicit definition of $\Irr_Y(\shM)$ itself only once, in the proof of \Cref{Irr-vs-restr}\ref{Irr-vs-restr.irr}.

% Let $Y\subseteq X$ be a subvariety, $i\colon Y\hookrightarrow X$ inclusion, and let $\shO_{\widehat{X|Y}}$ be the formalization of $\shO_{X^\an}$ along $Y$. The \emph{irregularity complex} of a holonomic $\shD_{X^\an}$-module $\shM$ is 
% \begin{equation}
%     \Irr_Y(\shM) \coloneqq i^{-1}\shRHom_{\shD_{X^\an}}(\shM, \shO_{\widehat{X|Y}}/\shO_{X^\an}).
% \end{equation}
% This is a perverse sheaf supported on $Y$ (\cite[Rem.~3.5-15]{Meb04}), however we won't need this fact. In fact, we will never even need the explicit definition of $\Irr_Y(\shM)$ itself, only the fact that it is a complex of sheaves.

Note that by \cite[Cor.~3.4-4]{Meb04} an equivalent definition of $\Irr_Y(\shM)$ is 
\begin{equation}
    \Irr_Y(\shM) \cong i^{-1}\shRHom_{\shD_{X^\an}}(\shM, \shO_{\widehat{X|Y}}/\shO_{X^\an})[-1],
\end{equation}
where $\shO_{\widehat{X|Y}}$ is the formalization of $\shO_{X^\an}$ along $Y$.

\begin{definition}
    \begin{enumerate}[label=\textnormal{(\alph*)}]
        \item A holonomic $\shD_{X^\an}$-module $\shM$ is \emph{regular} if $\Irr_Y(\shM)$ has no cohomology for every hypersurface $Y$ of $X$.\footnote{Mebkhout's original definition is for every \emph{subvariety} $Y$ (using a slightly modified definition of $\Irr_Y(\shM)$), but he then remarks that it is equivalent to consider only hypersurfaces.}
        \item Let $\overline{X}$ be any smooth completion of $X$, $j\colon X\hookrightarrow\overline{X}$ inclusion. A holonomic $\shD_X$-module $\shM$ is \emph{regular} if $(j_*\shM)^\an$ is regular. Note that this is independent of the choice of smooth completion.
    \end{enumerate}
\end{definition}
% \medskip
\begin{remark}
    One reason that the definition of regularity in the algebraic category is different from in the analytic category is as follows: Let $\shE\neq0$ be an (algebraic) integrable connection on $X$, i.e.\ $\shE$ is locally free as an $\shO_X$-module. Although $j_*\shE^\an$ is a $\shD_{X^\an}$-module, it is never a coherent $\shD_{X^\an}$-module. By Deligne's Riemman--Hilbert Correspondence (\cite[Th.~5.2.20]{htt}), it turns out that there is \emph{always} a unique \emph{regular} analytic meromorphic connection $\shM\subseteq j_*\shE^\an$ such that $j^{-1}\shM = \shE^\an$. On the other hand, although $(j_*\shE)^\an$ is a meromorphic connection contained in $j_*\shE^\an$, it is equal to $\shM$ if and only if $\shE$ is regular.
\end{remark}
\medskip

This definition simplifies significantly when $\shM$ is a meromorphic connection on a complete variety.
\begin{theorem}[{\cite[Cor.~4.3-14]{Meb04}}]
    A meromorphic connection $\shM$ on a smooth \emph{complete} variety $X$  with pole divisor $Y$ is regular if and only if the complex $\Irr_Y(\shM^\an)$ has no cohomology.
\end{theorem}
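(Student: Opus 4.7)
The implication ($\Rightarrow$) is immediate from the definition: if $\shM$ is regular then $\Irr_Z(\shM^\an)$ is acyclic for every hypersurface $Z\subseteq X$, and in particular for $Z=Y$. All the content lies in the converse, so I will focus my plan there.

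My plan for ($\Leftarrow$) is to verify stalkwise that $\Irr_Z(\shM^\an)_p=0$ for every hypersurface $Z\subseteq X$ and every $p\in Z$. I would split into two cases according to whether $p\in Y$.

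\emph{Case 1: $p\notin Y$.} Near such a $p$, $\shM^\an$ is $\shO_{X^\an}$-coherent (since $\shM$ is a meromorphic connection with pole divisor $Y$, it is a locally free $\shO_X(*Y)$-module of finite rank, and away from $Y$ the sheaf $\shO_X(*Y)$ coincides with $\shO_X$). Hence $\shM^\an$ is an integrable connection on a neighborhood of $p$. For integrable connections, the formal/convergent comparison is an isomorphism (this is essentially the holomorphic Poincaré lemma for connections), so using the alternative description
\[
\Irr_Z(\shM^\an) \cong i_Z^{-1}\shRHom_{\shD_{X^\an}}(\shM^\an, \shO_{\widehat{X|Z}}/\shO_{X^\an})[-1]
\]
recalled in the excerpt, the stalk vanishes.

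\emph{Case 2: $p\in Y$.} Here I would reduce to $Z\supseteq Y$ by replacing $Z$ with $W=Y\cup Z$. For this the key technical input is a Mayer--Vietoris / cut-off comparison of Mebkhout relating $\Irr_Z$, $\Irr_Y$, $\Irr_W$, and $\Irr_{Y\cap Z}$ for a meromorphic connection along $Y$, whose upshot is that $\Irr_W(\shM^\an)=0$ already forces $\Irr_Z(\shM^\an)_p=0$ for $p\in Z\cap Y$. Once we are in the situation $W\supseteq Y$, the fact that $\shM$ is meromorphic along $Y$ lets us identify $\shM(*W)$ with the extension of scalars $\shM\otimes_{\shO_X(*Y)}\shO_X(*W)$; restricting $\Irr_W(\shM^\an)$ to $Y\subseteq W$ recovers $\Irr_Y(\shM^\an)$, which vanishes by hypothesis, and on $W\setminus Y$ we are back in Case~1. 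Combining these two partial vanishings with the perversity (or at least the constructibility) of $\Irr_W(\shM^\an)$ gives the desired vanishing on all of $W$, hence on $Z$.

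The main obstacle is setting up Case~2 rigorously: I need to pass between $\Irr_Z$ and $\Irr_W$ for $W=Y\cup Z$, which requires either Mebkhout's formalism of the formal completion $\shO_{\widehat{X|Z}}$ and its compatibility under changing the hypersurface, or equivalently an excision triangle of irregularity complexes along $Y\cap Z$. This is exactly the nontrivial ingredient supplied by \cite[\S4]{Meb04}; once it is in hand, the stratification by $Y$ versus $X\setminus Y$ and Case~1 close the argument. The completeness hypothesis on $X$ enters only through the definition of regularity in the algebraic setting (so that $\shM$ itself, rather than a completion, is what must be regular analytically), not in the local vanishing argument above.
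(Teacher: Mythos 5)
This theorem is not proved in the paper --- it is quoted verbatim as a black-box citation to Mebkhout \cite[Cor.~4.3-14]{Meb04}, so there is no internal argument to compare against. What remains is to assess whether your independent sketch is correct.

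Your Case~1 is sound: away from $Y$, the module $\shM^\an$ is $\shO_{X^\an}$-coherent, and integrable connections without poles are regular along every hypersurface by the classical formal/analytic comparison. But Case~2 contains a concrete error. You assert that restricting $\Irr_W(\shM^\an)$ to $Y\subseteq W$ (with $W=Y\cup Z$) recovers $\Irr_Y(\shM^\an)$. That is false in general: $\Irr_W$ is built from $\shM^\an(*W)=\shM^\an(*Z)$, while $\Irr_Y$ is built from $\shM^\an(*Y)=\shM^\an$, and these localizations differ along $Z$; consequently the two complexes agree on $Y\setminus Z$ but can differ on $Y\cap Z$ --- which is precisely the locus Case~2 must handle. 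Appealing to ``perversity or at least constructibility'' is likewise insufficient, since a perverse or constructible sheaf vanishing on a dense open set can still be nonzero. The ingredient that actually closes the argument is Mebkhout's support theorem \cite[Th.~3.1.1]{Meb89}, which the paper does invoke in the proof of \Cref{th:Irr-is-0}: $\Supp(\Irr_W(\shM^\an))$ is a \emph{union of irreducible components} of $W$. Granting that, one checks vanishing at a general point $q$ of each component $W_0$ of $W$. If $W_0\not\subseteq Y$, pick $q\notin Y$ and apply your Case~1. If $W_0\subseteq Y$, then for general $q\in W_0$ the germs of $W$, $Y$, and $Z$ at $q$ all coincide as reduced hypersurfaces, so $\shM^\an(*W)=\shM^\an$ near $q$ and $\Irr_W(\shM^\an)_q=\Irr_Y(\shM^\an)_q=0$ by hypothesis. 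This route also shows that the Mayer--Vietoris comparison with $\Irr_{Y\cap Z}$ you anticipated needing is not required. Your remark that completeness enters only in translating algebraic into analytic regularity is correct.
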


For the convenience of the reader, we also include the following useful fact about the category of regular holonomic $\shD_X$-modules.

\begin{theorem}[{\cite[Th.~4.2--4]{Meb04}}]
    The category of regular holonomic $\shD_X$-modules is a full abelian subcategory of the category of holonomic $\shD_X$-modules which is closed under extensions.
\end{theorem}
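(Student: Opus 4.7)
The plan is to reduce everything to a short exact sequence of perverse sheaves on the pole hypersurface, thereby transporting abelian-ness from the category of perverse sheaves. The ``full subcategory'' part is automatic, since regular holonomicity is an intrinsic property of the underlying module; what requires work is stability under kernels, cokernels, images, and extensions inside the holonomic category. I would treat the analytic case first; the algebraic case follows by passing to a smooth completion and analytifying.

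First I would build a distinguished triangle of irregularity complexes. Given a short exact sequence $0 \to \shM' \to \shM \to \shM'' \to 0$ of holonomic $\shD_{X^\an}$-modules and any hypersurface $Y \subseteq X$, the functor $(-)(*Y) = (-) \otimes_{\shO_X} \shO_X(*Y)$ is exact because $\shO_X(*Y)$ is flat (a filtered colimit of line bundles). Applying $\shRHom_{\shD_{X^\an}}(-, \shO_{X^\an})$ then yields a distinguished triangle in the derived category with the direction reversed by contravariance, and restricting via $i^{-1}$ produces
\[ \Irr_Y(\shM'') \to \Irr_Y(\shM) \to \Irr_Y(\shM') \to +1. \]

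The crucial second step is to invoke Mebkhout's positivity of the irregularity (cited just after the definition of $\Irr_Y$ in the excerpt): $\Irr_Y(\shN)$ is a \emph{perverse sheaf} on $Y$ whenever $\shN$ is holonomic. Because each of the three terms of the triangle is perverse, the long exact sequence in perverse cohomology collapses to a genuine short exact sequence of perverse sheaves,
\[ 0 \to \Irr_Y(\shM'') \to \Irr_Y(\shM) \to \Irr_Y(\shM') \to 0. \]

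With this short exact sequence in hand the theorem is immediate. If $\shM$ is regular, the middle term vanishes for every $Y$; exactness forces both outer terms to vanish for every $Y$, so both $\shM'$ and $\shM''$ are regular. Hence kernels, cokernels, and images of morphisms of regular holonomic modules are again regular, establishing the abelian property. Symmetrically, if $\shM'$ and $\shM''$ are regular, both outer terms vanish for every $Y$ and the middle term must vanish too, giving closure under extensions. The main obstacle is Mebkhout's perversity theorem itself: it is the only ingredient that converts the distinguished triangle into an honest short exact sequence, and without it the vanishing of the middle term does not force vanishing of the outer terms.
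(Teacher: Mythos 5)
Your argument is essentially correct, but note that the paper does not prove this statement at all---it is quoted directly from Mebkhout \cite[Th.~4.2--4]{Meb04}, so there is no in-paper proof to compare against. What you have written reconstructs the standard (and, I believe, Mebkhout's own) line of reasoning: localization along $Y$ is exact, dualizing yields a distinguished triangle, restricting by $i^{-1}$ preserves triangles, and then the perversity of $\Irr_Y(-)$ (which the paper cites at \cite[Rem.~3.5-15]{Meb04}) collapses the triangle into a genuine short exact sequence of perverse sheaves, whence the middle term vanishes for all $Y$ if and only if both outer terms do. The key insight that drives the whole proof is precisely the one you flagged: without perversity, $\Irr_Y(\shM)=0$ would not force $\Irr_Y(\shM')=\Irr_Y(\shM'')=0$.

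Two points deserve tightening. First, your reduction of the algebraic case is too brief: $j_*$ for an arbitrary open immersion $j\colon X\hookrightarrow \overline{X}$ is only left exact. You should observe that one may choose the smooth completion so that $\overline{X}\setminus X$ is a divisor (Hironaka), in which case $j$ is an affine morphism, $j_*$ is exact on quasi-coherent sheaves, and analytification is exact; then short exact sequences of holonomic modules are carried to short exact sequences of analytic holonomic modules, to which your analytic argument applies, and the paper's remark that regularity is independent of the choice of smooth completion justifies this. Second, you should make explicit why ``closed under subobjects and quotients'' gives abelian-ness: for a morphism $f\colon \shM_1\to\shM_2$ of regular holonomic modules, $\ker f$ is a submodule of $\shM_1$ (regular by your SES argument applied to $0\to\ker f\to\shM_1\to\im f\to 0$) and $\coker f$ is a quotient of $\shM_2$ (regular by $0\to\im f\to\shM_2\to\coker f\to 0$), so the regular holonomics form an abelian subcategory with the kernels and cokernels computed in the ambient holonomic category.
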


\subsubsection{Non-characteristic restrictions}
\begin{definition}
    Let $i\colon Z\hookrightarrow X$ be the inclusion of a smooth subvariety into the smooth variety $X$, and let $\shM$ be a coherent $\shD_X$-module. We say that $i$ is \emph{non-characteristic for $\shM$} if 
    $T^*_Z X \cap \Ch(\shM) \subseteq T^*_XX$.
\end{definition}

This is a special case of a more general definition of non-characteristic which applies to arbitrary morphisms $f\colon Z\to X$ between smooth varieties (see \cite[Def.~2.4.2]{htt}); however, we won't need the general version in this article.

The main reason for us to consider non-characteristic restrictions is that, as a special case of the Cauchy--Kowalevski--Kashiwara theorem (\cite[Th.~4.3.2]{htt}), $\rank(\shM)= \rank(i^*\shM)$ when $i$ is non-characteristic for $\shM$. 

\subsection{Filtrations and weights}
\subsubsection{Filtrations in general}
If $(R, F_\bullet R)$ is a filtered ring, we denote by $\gr^F R$ the associated graded ring with respect to this filtration. Similarly, if $(M, F_\bullet M)$ is a filtered module over the filtered ring $(R, F_\bullet R)$, we denote by $\gr^F M$ the associated graded $\gr^F R$-module.

\begin{definition}
    If $(M, F_\bullet M)$ is a filtered module over a filtered ring $(R, F_\bullet R)$, we say that the filtration on $M$ is a \emph{good filtration} if there exist $m_1,\ldots,m_s\in M$ and $k_1,\ldots,k_s\in \ZZ$ such that for all $p\in \ZZ$, 
    \[F_pM = F_{p-k_1}R\cdot m_1 + \cdots F_{p-k_s}R\cdot m_s.\]
\end{definition}

\subsubsection{\texorpdfstring{$V$}{V}-filtrations and weights}
Given a point $p$ in a smooth variety $X$, define
\begin{equation}
    V_\bullet^{\{p\}}\shD_X \coloneqq \Set{P\in \shD_X | P(\frm_p^i) \subseteq \frm_p^{i-\bullet}\text{ for all }i},
\end{equation}
where $\frm_p$ is the maximal ideal at $p$. 

Given a $w\in \RR^n$, the \emph{$(-w,w)$-order} of $0\neq P=\sum_{\alpha,\beta} c_{\alpha\beta}x^\alpha\partial^\beta\in D_n$ is 
\begin{equation}
    \ord_{(-w,w)}(P) \coloneqq \max\Set{w\cdot(\beta-\alpha) | c_{\alpha\beta}\neq 0},
\end{equation}
the \emph{$(-w,w)$-initial form} of $0\neq P=\sum_{\alpha,\beta} c_{\alpha\beta}x^\alpha\partial^\beta\in D_n$ is 
\begin{equation}
    \init_{(-w,w)}(P) \coloneqq \sum_{w\cdot(\beta-\alpha) = \ord_{(-w,w)}(P)} c_{\alpha\beta}x^\alpha\partial^\beta,
\end{equation}
and the filtration \emph{$V_\bullet D_n$ with respect to $w$} is 
\begin{equation}
    V_\bullet D_n \coloneqq \Set{P \in D_n | \ord_{(-w,w)}(P)\leq \bullet}.
\end{equation}

We remark that there are isomorphisms $\gr^{V^{\{p\}}}\shD_X\cong \shD_X$ (near $p$) and $\gr^VD_n\cong D_n$. 

If $\shM$ (resp.\ $M$) is a $\shD_X$-module (resp.\ a $D_n$-module), we will always denote by $U_\bullet \shM$ (resp.\ $U_\bullet M$) a filtration with respect to $V_\bullet^{\{p\}}\shD_X$ (resp.\ $V_\bullet D_n$). The particular point $p$ or weight $w$ will be clear from the context.

%%%%%%%%%%%%%%%%%%%%%%%%%%%%%%%%%%%%%%%%%%%%%%
\section{Examples}\label{sec:examples}
%%%%%%%%%%%%%%%%%%%%%%%%%%%%%%%%%%%%%%%%%%%%%%
\begin{example}\label{ex:elemirr}
    Let $X$ be a smooth variety of dimension $n$, $f$ any rational function on $X$. Let $\shE^f$ be the rank one meromorphic connection given by $d+df\wedge$. One calls $\shE^f$ an \emph{elementary irregular meromorphic} connection, and its pole divisor is exactly the set $Y$ of poles of $f$. It is well-known that $\Supp(\Irr_Y(\shE^f))=Y$. We give an alternative proof of this fact using \Cref{th:Irr-is-0}. 

    For simplicity, let us assume that $Y$ is irreducible. Let $p$ be a general point of $Y$. Choose coordinates $x_1,\ldots,x_n$ centered at $p$, and write $f=a/b$, where $a,b$ are regular near $p$. Then, near $p$,
    \[\shE^f = \frac{\shD_X}{\sum_{i=1}^n\shD_X \left(b^2\partial_i - b^2\frac{\partial f}{\partial x_i}\right)}=\frac{\shD_X}{\sum_{i=1}^n \shD_X \left(b^2\partial_i - b\frac{\partial a}{\partial x_i} + a\frac{\partial b}{\partial x_i}\right)}.\]
    Equip $\shE^f$ with the $V^{\{p\}}_\bullet \shD_X$-filtration induced by this presentation. Since the rank of $\shE^f$ is 1, we want to show that $\gr^U(\shE^f)$ has rank zero, i.e.\ is torsion. To do this, it suffices to show that $\init_{(-1,1)}(b^2\partial_i - b^2\frac{\partial f}{\partial x_i}) = \init_{(-1,1)}(b^2\frac{\partial f}{\partial x_i})$ for some $i$. Equivalently, noticing that $\ord_{(-1,1)}(\partial f/\partial x_i)= \ord_{(-1,1)}(f) + 1$, we need to show that 
    \[ \ord_{(-1,1)}(f) > 0.\]
    But this is true exactly because $p$ is contained in the pole divisor of $f$.
\end{example}

\bigskip

\begin{example}\label{ex:gkz111012}
    Consider the $\shD_{\CC^3}$-module 
    \[ \shM = \frac{\shD_{\CC^3}}{\shD_{\CC^3}\{\partial_2^2 - \partial_1 \partial_3,\; E_1-\beta_1,\; E_2-\beta_2 \}},\]
    where $\beta_1,\beta_2\in \CC$, $E_1=x_1\partial_1 + x_2\partial_2+x_3\partial_3$, and $E_2 = x_2\partial_2 + 2x_3\partial_3$. This is the $A$-hypergeometric system corresponding to the matrix $A=\left[\begin{smallmatrix}
        1&1&1\\0&1&2
    \end{smallmatrix}\right]$
    and parameter $\beta=(\beta_1,\beta_2)$. Since the vector $(1,1,1)$ is in the row-span of $A$, $\shM$ is regular (\cite[Ch.~II, \S6.2, Th.]{hotta1998equivariant}). We show this using \Cref{th:Irr-is-0}.

    The rank of $\shM$ is 2. The characteristic variety of $\shM$ is
    \[ \Ch(\shM) = T^*_{\CC^3}\CC^3 \cup T^*_{V(x_1)}\CC^3 \cup T^*_{V(x_3)}\CC^3 \cup T^*_{V(x_2^2-4x_1x_3)}\CC^3.\]
    Therefore, we need to check that $\gr^U(\shM)$ has rank 2 when computed for general points of $V(x_1)$, $V(x_3)$, $V(x_2^2-4x_1x_3)$, along with a general point of the hyperplane at infinity. This is easily done using a computer algebra system such as Macaulay2 \cite{M2}. For instance, if $\beta = (1/4,1/4)$ and $p=(0,1,1)\in V(x_1)$ (which is general enough), we can compute $\gr^U(\shM)$ as follows: first translate the coordinate system to $p$ to get the $D_3$-ideal
    \[ D_3\{\partial_2^2-\partial_1\partial_3, x_1\partial_1 + (x_2-1)\partial_2 + (x_3-1)\partial_3 -\tfrac{1}{4}, (x_2-1)\partial_2 + 2(x_3-1)\partial_3 - \tfrac{1}{4}\}.\]
    Using Macualay2, one finds that this has initial ideal (with respect to the weight $(-1,-1,-1,1,1,1)$) 
    \[ D_3\{\partial_2, \partial_3, x_1\partial_1^2 + \partial_1\}.\]
    It is easy to see that this has rank 2; alternatively, the rank can be computed using Macaulay2.
\end{example}

\bigskip

\begin{example}\label{ex:gkz110011}
    Consider the $\shD_{\CC^3}$-module
    \[ \shM = \frac{\shD_{\CC^3}}{\shD_{\CC^3}\{\partial_2 - \partial_1 \partial_3,\; E_1-\beta_1,\; E_2-\beta_2 \}},\]
    where $\beta_1,\beta_2\in \CC$, $E_1=x_1\partial_1 + x_2\partial_2$, and $E_2 = x_2\partial_2 + x_3\partial_3$. This is the $A$-hypergeometric system corresponding to the matrix $A=\left[\begin{smallmatrix}
        1&1&0\\0&1&1
    \end{smallmatrix}\right]$
    and parameter $\beta=(\beta_1,\beta_2)$. Since the vector $(1,1,1)$ is \emph{not} in the row-span of $A$, $\shM$ is irregular (\cite[Cor.~3.16]{SW-irreg-gkz} or \cite[Th.~7.6]{nilssonAhyp}). We show this using \Cref{th:Irr-is-0}.
    
    The rank of $\shM$ is 2. The characteristic variety of $\shM$ is 
    \[\Ch(\shM) = T^*_{\CC^3}\CC^3\cup T^*_{V(x_1)}\CC^3 \cup T^*_{V(x_2)}\CC^3 \cup T^*_{V(x_3)}\CC^3 \cup T^*_{V(x_1,x_2)}\CC^3 \cup T^*_{V(x_2,x_3)}\CC^3.\]
    If we expected that $\shM$ were regular, then we would need to check that the rank of $\gr^U(\shM)$ equals 2 when computed at a general point of $V(x_1)$, $V(x_2)$, $V(x_3)$, and the hyperplane at infinity. However, we are confirming that $\shM$ is irregular. Following the same procedure as in \Cref{ex:gkz111012}, we find:
    \begin{itemize}
        \item At general points of $V(x_1)$, $V(x_3)$, and the hyperplane at infinity, the rank of $\gr^U(\shM)$ is 2, i.e.\ there is no irregularity.
        \item At a general point of $V(x_2)$, the rank of $\gr^U(\shM)$ is 1.
    \end{itemize}
\end{example}

%%%%%%%%%%%%%%%%%%%%%%%%%%%%%%%%%%%%%%%%%%%%%%
\section{Fuchs' theorem for modules}\label{sec:Fuchs}
%%%%%%%%%%%%%%%%%%%%%%%%%%%%%%%%%%%%%%%%%%%%%%
We are going to need a generalization of Fuchs' theorem (\cite[Th.~1.4.18]{sst}) which applies to arbitrary (holonomic) $D_1$-modules with arbitrary good $V_\bullet D_1$-filtrations rather than just cyclic ones. As part of the proof, we will need to know the equality in \Cref{lem:gr-and-rh}. Finally, we would like to thank Saiei-Jaeyong Matsubara-Heo for his help with the proofs in this section.

Set $\widehat{D}_1\coloneqq \CC\llbracket x\rrbracket\otimes_{\CC[x]} D_1$. Define the filtrations $V_\bullet D_1$ and $V_\bullet \widehat{D}_1$ by
\begin{align*}
    V_k D_1 &\coloneqq \Set{P\in D_1 | P(\CC[x] x^p) \subseteq \CC[x] x^{p-k}\text{ for all } p\in \ZZ}\\
    V_k \widehat{D}_1 &\coloneqq \Set{P\in D_1 | P(\CC\llbracket x\rrbracket x^p) \subseteq \CC\llbracket x\rrbracket x^{p-k}\text{ for all } p\in \ZZ}\\
        &= \CC\llbracket x\rrbracket\otimes_{\CC[x]} V_k D_1.
\end{align*}
% \begin{definition}
%     Let $M$ be a $D_1$-module. A filtration $U_\bullet M$ of $M$ is \emph{$V_\bullet D_1$-good} if there exist $m_1,\ldots,m_r\in M$ and $p_1,\ldots,p_r\in \ZZ$ such that for all $k\in \ZZ$,
%     \[ U_kM = V_{k-p_1}D_1 \cdot m_1 + \cdots + V_{k-p_r}D_r \cdot m_r.\]
%     We make an analagous definition for filtrations of a $\widehat{D}_1$-module.
% \end{definition}

When the good filtration $U_\bullet M$ is the canonical $V$-filtration (see \cite[Prop.~6.1.2]{sab93} for the definition), \Cref{lem:gr-and-rh} is well-known to be true, since in that case $\gr^V(M)$ is essentially isomorphic to $M$ \cite[Lem.~6.2.6]{sab93} (note that in this situation $M$ is assumed to be regular!). However, in the process of proving \Cref{th:Irr-is-0}, we are going to end up with a $D_1$-module with a good filtration that is not necessarily the canonical one.

\begin{lemma}\label{lem:gr-and-rh}
    Let $M$ be a regular holonomic $\widehat{D}_1$-module, and let $U_\bullet M$ be any $V_\bullet \widehat{D}_1$-good filtration. Then $\rank(\gr^U(M)) = \rank(M)$.
\end{lemma}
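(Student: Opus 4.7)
The plan is to reduce the general case to the canonical $V$-filtration case (which the authors note is already known) by showing that $\rank(\gr^U M)$ depends only on $M$ and not on the particular good filtration $U_\bullet M$.

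To prove this filtration-independence, my approach is a Rees-module deformation. I would form $\tilde V \coloneqq \bigoplus_{k\in\ZZ} V_k\widehat{D}_1\cdot t^k \subset \widehat{D}_1[t,t^{-1}]$ and $\tilde M \coloneqq \bigoplus_k U_k M\cdot t^k \subset M[t,t^{-1}]$. Goodness of $U$ makes $\tilde M$ finitely generated over $\tilde V$, and since $\tilde M$ embeds into the $\CC[t]$-free module $M[t,t^{-1}]$ it is torsion-free and hence $\CC[t]$-flat ($\CC[t]$ being a PID). A routine computation gives $\tilde M/(t-1)\cong M$ and $\tilde M/(t)\cong \gr^U M$, realizing $\tilde M$ as a flat deformation from $M$ (at $t=1$) into $\gr^U M$ (at $t=0$). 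The critical element is $y\coloneqq xt^{-1}\in V_{-1}\widehat{D}_1\cdot t^{-1}\subseteq \tilde V$, which specializes to $x\in\widehat{D}_1$ at $t=1$ and to the symbol $\sigma(x)\in\gr^V\widehat{D}_1=D_1$ at $t=0$. Inverting $y$ via Ore-type localization and using $\CC[t]$-flatness, the fibers $M[x^{-1}]$ (of dimension $\rank M$ over $\CC((x))$) and $(\gr^U M)[\sigma(x)^{-1}]$ (of dimension $\rank\gr^U M$ over $\CC(\sigma(x))$) are forced to have the same generic dimension, giving the desired equality.

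The main obstacle is making this non-commutative localization and flat-deformation argument rigorous, since Ore conditions for $y$ and the behavior of the Rees algebra under localization require careful bookkeeping. As a backup, one can induct on the length of a composition series of $M$. Both $\rank(M)$ and $\rank(\gr^U M)$ are additive on short exact sequences of good-filtered modules (the induced filtrations being automatically good via the snake lemma), so the problem reduces to simple regular holonomic subquotients. These are either $x$-torsion modules, where $\sigma(x)^n$ kills $\gr^U M$ for some $n$ and hence both ranks vanish, or the rank-one cases $\widehat{D}_1/\widehat{D}_1\partial\cong\CC\llbracket x\rrbracket$ and $\widehat{D}_1/\widehat{D}_1(x\partial-\alpha)$, for which any good filtration can be written explicitly in terms of finitely many generators and $\rank(\gr^U M)$ can be computed directly by inspecting the generic stalk, matching $\rank M=1$ in each case.
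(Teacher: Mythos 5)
Your primary (Rees-deformation) approach and your backup (composition series) approach are quite different in spirit, so let me address them separately.

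\textbf{The Rees approach has a genuine gap.} The step ``using $\CC[t]$-flatness, the fibers $M[x^{-1}]$ and $(\gr^U M)[\sigma(x)^{-1}]$ are forced to have the same generic dimension'' does not follow from flatness alone. Flatness over $\CC[t]$ gives constancy (or at least semicontinuity) of fiber rank only when the module is finitely generated over a \emph{commutative Noetherian} $\CC[t]$-algebra; $\tilde M[y^{-1}]$ is finitely generated over the noncommutative ring $\tilde V[y^{-1}]$, and you never establish finite generation over a suitable commutative subring such as $\CC[t][y,y^{-1}]$ or $\CC\llbracket x\rrbracket[t,y]/(ty-x)$. This is not a mere bookkeeping issue: it is precisely where the hypothesis that $M$ is \emph{regular} must enter, and as written your argument never uses it. Indeed, take the irregular connection $M=\widehat D_1/\widehat D_1(x^2\partial+1)$ with the cyclic good filtration $U_kM=V_k\widehat D_1\cdot\overline 1$. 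One checks $U_kM=M$ for every $k$ (already $U_0M=V_0\cdot\overline 1\supseteq\CC\llbracket x\rrbracket[x^{-1}]=M$ because the iterates $(x\partial)^n\cdot\overline 1$ have poles of arbitrarily high order). Then $\tilde M\cong M[t,t^{-1}]$, which is perfectly $\CC[t]$-flat, yet $\tilde M/(t)=\gr^UM=0$ while $\tilde M/(t-1)=M$ has rank $1$. So the flat family exists, the localization exists, and the conclusion is false. The obstruction is exactly that $M[t,t^{-1}][y^{-1}]$ is not finite over $\CC[t][y^{\pm1}]$. Unless you can show that regularity forces such finite generation after inverting $y$ --- which I think is possible, but is the whole substance of the lemma --- the deformation argument proves nothing.

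\textbf{The backup approach is essentially the paper's proof.} The paper also reduces to rank $\leq 1$ pieces via additivity of $\rank(\gr^U-)$ on strictly filtered short exact sequences, and then handles the torsion case, the rank-one meromorphic case $\CC\llbracket x\rrbracket[x^{-1}]x^\alpha$ by explicit computation of every possible good filtration, and finally the torsion-free and general rank-one cases by d\'evissage. The only difference is that the paper inducts on $\rank(M)$ (peeling off the preimage of a rank-one submodule of $M[x^{-1}]$), whereas you propose inducting on the length of a composition series. Both work and both rely on the same structural facts. Two small imprecisions: (i) goodness of the induced and quotient filtrations on a subobject and quotient is not really a snake-lemma statement --- it follows because strictness is automatic for induced/quotient filtrations, so $\gr^UM'$ and $\gr^UM''$ are a sub- and quotient-module of $\gr^UM$, which is finitely generated over the Noetherian ring $\gr^V\widehat D_1\cong D_1$; (ii) in the torsion case you should say that $\gr^UM$ is $\sigma(x)$-\emph{torsion} (every element is killed by some power of $\sigma(x)$), not that a single $\sigma(x)^n$ kills the whole module --- already for the delta module $\widehat D_1/\widehat D_1x$ no finite power of $x$ annihilates everything. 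Either way $\gr^UM\otimes_{\CC[x]}\CC(x)=0$, which is what you need.

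If you want to salvage the Rees route, the correct intermediate claim to isolate is that for \emph{regular} $M$ the localized Rees module is a finitely generated module over a commutative Noetherian $\CC[t]$-algebra; proving this without already having the lemma in hand seems at least as hard as the d\'evissage. The backup argument, fleshed out with the explicit rank-one computation, is the more reliable path and matches what the paper does.
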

\begin{proof}
    We proceed via induction on the rank of $M$. 

    \smallskip
    \underline{Induction step.} Consider the map 
    \[ \varphi \colon M \longrightarrow M[x^{-1}].\]
    Since $M[x^{-1}]$ is a regular meromorphic connection, it has a rank 1 submodule $\widetilde{M}'$. Set $M'=\varphi^{-1}(\widetilde{M}')$ and $M''=M/M'$. Then we have a short exact sequence
    \[ 0 \to M' \to M \to M''\to 0.\]
    Equip $M'$ and $M''$ with the induced filtrations. Then $\gr^U$ preserves the exactness of the sequence, so
    \begin{align*}
        \rank(\gr^U M)
        &= \rank(\gr^U M') + \rank(\gr^U M'')\\
        &= \rank(M') + \rank(M'') \qquad (\text{by the induction hypothesis})\\
        &= \rank(M).
    \end{align*}
    
    \smallskip
    \underline{Base case.} Assume that $\rank(M)=1$.

    \smallskip
    \emph{Torsion case.} Assume that $M$ is torsion. Then $\gr^U(M)$ is also torsion, so both $M$ and $\gr^U(M)$ have rank zero.

    \smallskip
    \emph{Meromorphic case.} Assume that $M$ is meromorphic. Then up to isomorphism, $M=\CC\llbracket x\rrbracket[x^{-1}]x^\alpha$ for some $\alpha\in \CC$. Since $U_\bullet M$ is a good filtration, there exists $m_1,\ldots,m_s\in M$ and $p_1,\ldots,p_s\in \ZZ$ such that 
    \[ U_\bullet M = \sum_{i=1}^s V_{\bullet-p_i}\widehat{D}_1\cdot m_i.\]
    Since $\CC\llbracket x\rrbracket$ is a DVR with uniformizer $x$, and because $\CC\llbracket x\rrbracket\subseteq V_0\widehat{D}_1$ and $U_\bullet M$ is a $V_\bullet\widehat{D}_1$-filtration, we may assume that $m_i=x^{\alpha+k_i}$ for some $k_i$. Then
    \[ V_{\bullet-p_i}\widehat{D}_1\cdot m_i = V_{\bullet-p_i-k_i}\widehat{D}_1\cdot x^\alpha.\]
    Without loss of generality, assume that $p_1+k_1$ is the largest of the $(p_i+k_i)$'s. Then
    \[ U_\bullet M = V_{\bullet-p_1-k_1}\widehat{D}_1 x^\alpha.\]
    % one of the $m_i$'s---which we may take to be $m_1$---divides the rest, i.e.\ there exist $g_2,\ldots,g_s\in \CC\llbracket x\rrbracket$ such that $m_i=g_im_1$ ($i=2,\ldots,s$). Then because $\CC\llbracket x\rrbracket\subseteq V_0\widehat{D}_1$ and $U_\bullet M$ is a $V_\bullet\widehat{D}_1$-filtration,
    % \[ U_\bullet M = V_{\bullet-p_1}\widehat{D}_1\cdot m_1.\]
    % Moreover, since $\CC\llbracket x\rrbracket$ is a DVR, $\CC\llbracket x\rrbracket m_1 = \CC\llbracket x\rrbracket x^{\alpha+j}$ for some $j\in \ZZ$. 
    Then, up to twist in the grading,
    \[ \gr^U M = \CC[x][x^{-1}]x^{\alpha},\]
    which also has rank 1.

    \smallskip
    \emph{Torsion-free case.}
    Equip $M[x^{-1}]$ and $M[x^{-1}]/M$ with the induced filtrations. Then $\gr^U$ preserves the exactness of the short exact sequence (which is exact by torsion-freeness)
    \[ 0 \to M \to M[x^{-1}] \to M[x^{-1}]/M \to 0.\]
    Now use that the middle module is meromorphic and the right module is torsion, and that $\rank$ is additive on short exact sequences.
    
    \smallskip
    \emph{General case.}
    There is a short exact sequence
    \[ 0 \to K \to M \to C \to 0,\]
    where $K$ is torsion and $C=\im(M\to M[x^{-1}])$ is torsion-free. Equip $K$ and $C$ with the induced filtrations. Then applying $\gr^U$ preserves exactness, and we can use the previous cases to conclude.
\end{proof}

\begin{theorem}\label{th:Fuchs-for-mods}
    Let $M$ be a holonomic $D_1$-module, and let $U_\bullet M$ be any $V_\bullet D_1$-good filtration. Then $M$ is regular at 0 if and only if $\rank(\gr^U(M)) = \rank(M)$. 
\end{theorem}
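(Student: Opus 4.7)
The plan is to reduce to the formal completion at $x=0$ and then apply \Cref{lem:gr-and-rh} for the forward direction, handling the converse by a rank induction structurally parallel to the lemma's proof.

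For the reduction, set $\widehat{M}\coloneqq\CC\llbracket x\rrbracket\otimes_{\CC[x]}M$ with the induced good $V_\bullet\widehat{D}_1$-filtration $\widehat{U}_\bullet\coloneqq\CC\llbracket x\rrbracket\otimes_{\CC[x]}U_\bullet$. Flatness of $\CC\llbracket x\rrbracket$ over $\CC[x]_{(x)}$ yields $\gr^{\widehat{U}}\widehat{M}\cong\CC\llbracket x\rrbracket\otimes_{\CC[x]}\gr^U M$, and a further base change to $\CC((x))$ shows $\rank M=\rank\widehat{M}$ and $\rank\gr^U M=\rank\gr^{\widehat{U}}\widehat{M}$. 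Since $M$ is regular at $0$ if and only if $\widehat{M}$ is regular holonomic, the theorem reduces to its formal analogue, and the forward direction is then immediate from \Cref{lem:gr-and-rh}.

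For the converse, I would argue the contrapositive: if $\widehat{M}$ is irregular, then $\rank\gr^{\widehat{U}}\widehat{M}<\rank\widehat{M}$. Mirroring the proof of \Cref{lem:gr-and-rh}: the torsion submodule of $\widehat{M}$ is regular holonomic of rank zero, and combined with the SESs $0\to\widehat{M}_t\to\widehat{M}\to\widehat{M}/\widehat{M}_t\to 0$ and $0\to\widehat{M}\to\widehat{M}[x^{-1}]\to\widehat{M}[x^{-1}]/\widehat{M}\to 0$ and the closure of regular holonomic modules under extensions, this reduces us to the case where $\widehat{M}$ is itself an irregular meromorphic connection. We then induct on rank by splitting off a rank-one submodule: in the resulting SES $0\to M'\to\widehat{M}\to M''\to 0$, at least one of $M'$, $M''$ must be irregular (again by closure under extensions), and rank additivity for $\gr^{\widehat{U}}$ combined with the inductive hypothesis reduces the problem to the rank-one base case.

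The main obstacle is this rank-one irregular base case for an \emph{arbitrary} good $V$-filtration. Here $\widehat{M}\cong\CC\llbracket x\rrbracket[x^{-1}]\cdot v$ with $\partial v=g(x)v$ for some $g\in\CC((x))$ of pole order $d\geq 2$. The key computation is that iterated application of $x\partial\in V_0\widehat{D}_1$ to $v$ produces elements of arbitrarily negative $x$-adic order---concretely, $(x\partial)^k v$ has leading term of order $-k(d-1)$---so $V_0\widehat{D}_1\cdot v=M$, and after absorbing unit factors of $\CC\llbracket x\rrbracket\subseteq V_0\widehat{D}_1$, the same holds replacing $v$ by any nonzero element of $M$. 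Hence $M$ admits no proper nonzero $V_0\widehat{D}_1$-stable $\CC\llbracket x\rrbracket$-submodule, and each nonzero summand $V_{p-p_i}\widehat{D}_1\cdot m_i$ of any good $V$-filtration $\widehat{U}_p=\sum_i V_{p-p_i}\widehat{D}_1\cdot m_i$ must equal $M$. Therefore $\widehat{U}_p=M$ for all $p\in\ZZ$, so $\gr^{\widehat{U}}\widehat{M}=0$, giving $\rank\gr^{\widehat{U}}\widehat{M}=0<1=\rank\widehat{M}$ and closing the induction.
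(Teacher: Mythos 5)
Your reduction to the formal completion, the forward direction via \Cref{lem:gr-and-rh}, and the reduction to the meromorphic case via the two short exact sequences are all sound, and your rank-one base case computation (iterating $x\partial$ to get $V_0\widehat{D}_1\cdot v = M$, hence $\widehat{U}_p = M$ for all $p$) is correct and a nice direct argument. But the induction step has a genuine gap: you split off a rank-one $\widehat{D}_1$-submodule $M'\subseteq\widehat{M}$, and an \emph{irregular} formal meromorphic connection over $\CC((x))$ need not have one. This is precisely where the argument of \Cref{lem:gr-and-rh} uses regularity: a regular formal connection decomposes into Jordan blocks for $x\partial$ and therefore has a $\widehat{D}_1$-stable line, but an irreducible irregular connection of fractional slope does not. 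Concretely, take $\widehat{M} = \widehat{D}_1[x^{-1}]/\widehat{D}_1[x^{-1}](\partial^2 - x^{-3})$, a rank-two connection of slope $1/2$. A rank-one $\widehat{D}_1$-submodule spanned by $w = e_1 + b\,\partial e_1$ forces $b$ to solve the Riccati equation $b' = b^2 x^{-3} - 1$, and a leading-order analysis shows this has no solution $b\in\CC((x))$; the case $w\in\CC((x))\cdot\partial e_1$ is likewise impossible. So for this $\widehat{M}$ your induction never reaches a rank-one submodule, and the base case you prove does not cover the module you'd actually need to handle.

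To patch this you would need either to replace rank-one submodules with irreducible submodules and reprove the base case for an \emph{irreducible} irregular connection of arbitrary rank (showing $V_0\widehat{D}_1\cdot v = \widehat{M}$ for all $v\neq 0$, which is plausible via the nonexistence of an $x\partial$-stable lattice but is not the same computation you gave), or to pass to a ramified cover $t^k = x$ and track the $V$-filtration through ramification — both of which add real work. The paper avoids the induction entirely: it invokes Sabbah's decomposition \cite[Th.~6.3.1]{sab93}, $\widehat{M} = \widehat{M}_r\oplus\widehat{M}_i$ with $\widehat{M}_i = \bigcap_{\lambda}U_\lambda\widehat{M}$, so the irregular summand is visibly contained in every filtration step and $\gr^U$ identifies with $\gr^U(\widehat{M}_r)$; \Cref{lem:gr-and-rh} then applies directly to the regular part, and the rank discrepancy is exactly $\rank(\widehat{M}_i)$, which vanishes iff $\widehat{M}_i = 0$. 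This is the mechanism your proposal is missing — what kills the irregular part is not an inductive decomposition into lines, but the structural fact that a good $V$-filtration on $\widehat{M}$ automatically contains the whole irregular summand in every step.
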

\begin{proof}
    Let $\widehat{M}=\CC\llbracket x\rrbracket\otimes_{\CC[x]} M$. This has the same rank as $M$, and it is standard that $\widehat{M}$ is regular at 0 (as a $\widehat{D}_1\coloneqq \CC\llbracket x\rrbracket\otimes_{\CC[x]} D_1$-module) if and only if the same is true for $M$ (as a $D_1$-module). The filtration $U_\bullet M$ induces a filtration $U_\bullet \widehat{M}$ on $\widehat{M}$ which is $V_\bullet \widehat{D}_1$-good. 
    
    According to \cite[Th.~6.3.1]{sab93} and its proof, $\widehat{M}$ decomposes as $\widehat{M}=\widehat{M}_r \oplus \widehat{M}_i$, where $\widehat{M}_r$ is regular, $\widehat{M}_i$ is an irregular meromorphic connection, and 
    \[ \widehat{M}_i = \bigcap_{\lambda \in \ZZ} U_\lambda \widehat{M}.\]
    So, all irregularity disappears upon passing to $\gr^U \widehat{M} (= \gr^U M)$, i.e.\ 
    \[ \gr^U(\widehat{M}_r) = \gr^U(\widehat{M}) (= \gr^U(M)).\]
    Thus, 
    \[ \rank(\gr^U(M)) = \rank(\gr^U \widehat{M}_r) = \rank(\widehat{M}_r),\]
    where the second equality is  by \Cref{lem:gr-and-rh}.
    Thus,
    \[ \rank(M) = \rank(\widehat{M}) = \rank(\widehat{M}_r) + \rank(\widehat{M}_i) = \rank(\gr^U(M)) + \rank(\widehat{M}_i).\]
    Now use that since $\widehat{M}_i$ is meromorphic, its rank vanishes if and only if $\widehat{M}_i$ is zero, i.e.\ if and only if $M$ is regular.
\end{proof}

%%%%%%%%%%%%%%%%%%%%%%%%%%%%%%%%%%%%%%%%%%%%%%
\section{Restriction and associated graded modules}\label{sec: restr-and-gr}
%%%%%%%%%%%%%%%%%%%%%%%%%%%%%%%%%%%%%%%%%%%%%%
In general, there is no reason to expect that restriction to a smooth subvariety commutes with taking $\gr^U$, i.e.\ that the surjection in \Cref{gr-surj} below is an isomorphism. In this section, we show that this commutation does work as long as the restriction is non-characteristic (\Cref{hypsf-non-char-gr} in the hypersurface case, \Cref{th:non-char-gr} in the general case). Finally, we would like to thank Henry Dakin for his help with the proof of \Cref{hypsf-non-char-gr}.

Given a $D_n$-module $M$ equipped with a $V_\bullet D_n$-filtration with respect to a weight $w\in \RR^n$, we equip the pullback $i^*M$ via the map $i\colon \CC^r\hookrightarrow \CC^n$, $(x_1,\ldots,x_r)\mapsto (x_1,\ldots,x_r,0,\ldots,0)$, with the following filtration:
\begin{equation}
    U_\bullet i^*M \coloneqq \frac{U_\bullet M}{\left(\sum_{j=r+1}^n x_jM\right)\cap U_\bullet M}.
\end{equation}
This is a $V_\bullet D_r$-filtration with respect to the weight $(w_1,\ldots,w_r)$. Here, we have used that there is a canonical isomorphism
\[i^*M\cong \frac{M}{\sum_{j=r+1}^n x_jM}\]
of $D_r$-modules (see, e.g., the discussion of restriction in \cite[\S5.2]{sst} or \cite[\S1.3]{htt}).

\begin{lemma}\label{gr-surj}
    % Let $\shM$ be a coherent $\shD_X$-module equipped with a good $V^{\{p\}}_\bullet \shD_X$-filtration defined near $p\in X$. 
    Let $M$ be a finitely-generated $D_n$-module equipped with a good $V_\bullet D_n$-filtration with respect to a weight $w\in \RR^n$. Let $i\colon \CC^r\hookrightarrow \CC^n$ be given by $(x_1,\ldots,x_r)\mapsto (x_1,\ldots,x_r,0,\ldots,0)$. Then there is a natural surjection
    \[ \gr^U(i^*M)\twoheadrightarrow i^*\gr^U(M).\]
\end{lemma}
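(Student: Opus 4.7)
The plan is to realize both $\gr^U(i^*M)$ and $i^*\gr^U(M)$ as quotients of $\gr^U M$ by two related submodules, and to extract the natural comparison map from a containment of these submodules.

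First, I would apply the right-exactness of the associated-graded functor on filtered modules (with the induced filtrations on sub- and quotient-modules) to the short exact sequence $0 \to JM \to M \to i^*M \to 0$, where $JM := \sum_{j>r} x_j M$. This yields the canonical identification $\gr^U(i^*M) \cong \gr^U M/\gr^U(JM)$, where $\gr^U(JM)$ sits inside $\gr^U M$ as the graded submodule associated to the induced filtration $U_k(JM) := JM \cap U_k M$. On the other hand, by the definition of the $\shO$-module pullback, $i^*\gr^U M = \gr^U M/(J \cdot \gr^U M)$, where $J \cdot \gr^U M$ denotes the submodule generated by $x_{r+1}, \ldots, x_n \in \gr^V D_n$, viewed as homogeneous elements of degrees $-w_{r+1}, \ldots, -w_n$ respectively.

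Next, I would verify the key containment $J \cdot \gr^U M \subseteq \gr^U(JM)$ inside $\gr^U M$: a generator $x_j \cdot [m]$ of $(J\gr^U M)_k$, with $m \in U_{k+w_j} M$, equals $[x_j m]$, where $x_j m \in U_k M \cap JM = U_k(JM)$ (using that $x_j \in V_{-w_j} D_n$), so its class in $\gr^U_k M$ lies in $\gr^U_k(JM)$. From this inclusion of submodules inside $\gr^U M$, passage to the quotient produces the natural surjection between the two quotients $\gr^U M/(J\gr^U M)$ and $\gr^U M/\gr^U(JM)$ that is the comparison map of the lemma.

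The main obstacle is essentially formal --- principally careful bookkeeping around the right-exactness of the associated-graded functor on filtered modules with induced filtrations, which is standard but must be set up correctly. The substantive content is the recognition that the gap between $J \cdot \gr^U M$ and $\gr^U(JM)$ is precisely what obstructs the surjection from being an isomorphism; this gap closes in the non-characteristic setting treated in the next result, yielding the isomorphism $\gr^U(i^*M) \cong i^*\gr^U M$.
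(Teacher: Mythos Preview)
Your overall strategy—realize both objects as quotients of $\gr^U M$ by the submodules $\gr^U(JM)$ and $J\cdot\gr^U M$, then compare those submodules—is exactly the paper's approach, just phrased at the level of the whole graded module rather than degree by degree. The identification $\gr^U(i^*M)\cong \gr^U M/\gr^U(JM)$ via right-exactness of $\gr^U$ on strict short exact sequences, and $i^*\gr^U M\cong \gr^U M/(J\cdot\gr^U M)$, are both correct.

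However, there is a genuine direction error in your final step. From an inclusion $A\subseteq B$ of submodules of $C$, the induced surjection of quotients is $C/A\twoheadrightarrow C/B$. Your verified containment is $J\cdot\gr^U M\subseteq \gr^U(JM)$, so what you actually obtain is
\[
i^*\gr^U M \;=\; \gr^U M/(J\cdot\gr^U M)\;\twoheadrightarrow\;\gr^U M/\gr^U(JM)\;=\;\gr^U(i^*M),
\]
which is the \emph{reverse} of the arrow in the lemma. To produce a surjection $\gr^U(i^*M)\twoheadrightarrow i^*\gr^U M$ by this method you would need the opposite inclusion $\gr^U(JM)\subseteq J\cdot\gr^U M$, i.e.\ that every element of $JM\cap U_kM$ lies in $\sum_{j>r} x_j U_{k+w_j}M + U_{k-1}M$. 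That is a genuinely different (and harder) statement: it requires showing that any expression $\sum_j x_j m_j\in U_kM$ can be rewritten, modulo $U_{k-1}M$, with $m_j\in U_{k+w_j}M$, which amounts to a lifting-of-syzygies condition that you have not addressed. So as written, your argument does not establish the map in the direction the lemma asserts; it establishes the opposite surjection.
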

\begin{proof}
    % The $k$th graded components of $\gr^U(i^*M)$ and $i^*\gr^U(M)$ are
    % \[ \frac{U_k i^*M}{}\]
    We have
    \begin{align*}
        \gr^U_k(i^*M)
        &= \frac{U_k M}{U_{k-1}M + \left(\sum_{j=r+1}^n x_jM\right)\cap U_k M}
        \intertext{and}
        (i^*\gr^U(M))_k
        &= \frac{U_kM}{U_{k-1}M + \sum_{j=r+1}^n x_jU_kM}.
    \end{align*}
    From this, it is immediate that there is a surjective map of graded vector spaces $\gr^U(i^*M)\twoheadrightarrow i^*\gr^U(M)$, and it is easy to see that this map is $D_r$-linear.
\end{proof}

\subsection{Smooth hypersurfaces}
In this subsection, we first show (\Cref{hypsf-nc-implies-gr-nc}) in the hypersurface case that a non-characteristic restriction for $\shM$ is also non-characteristic for $\gr^U(\shM)$. This is then used in various places in the proof of \Cref{hypsf-non-char-gr}.

\begin{lemma}\label{hypsf-nc-implies-gr-nc}
    Let $M$ be a coherent $D_n$-module equipped with a good $V_\bullet D_n$-filtration (with respect to a weight $w\in \RR^n$ with $0<w_i\leq w_n$ for all $i$). Let $i\colon \CC^{n-1}\hookrightarrow \CC^n$ be $(x_1,\ldots,x_{n-1})\mapsto (x_1,\ldots,x_{n-1},0)$. If $i$ is non-characteristic for $M$ in a neighborhood of the origin, then the same is true for $\gr^U(M)$.
\end{lemma}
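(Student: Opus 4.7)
The plan is to proceed in two stages: first, relate the characteristic variety of $\gr^U(M)$ to a degeneration of $\Ch(M)$ in $T^*\CC^n$; and second, use the conicity of $\Ch(M)$ in the fiber direction together with the hypothesis $0<w_i\leq w_n$ to propagate the non-characteristic property from $M$ to $\gr^U(M)$.

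For the first stage, I identify $T^*\CC^n = \Spec\CC[x_1,\ldots,x_n,\xi_1,\ldots,\xi_n]$ and consider the $\CC^*$-action with weight $-w_i$ on $x_i$ and $+w_i$ on $\xi_i$ (this is the cotangent lift of the action on $\CC^n$ by weights $-w$). The $(-w,w)$-initial variety $\init_{(-w,w)}(C)$ of a closed conical subvariety $C$ is the central fiber of the associated flat degeneration of $C$ over $\AA^1$; set-theoretically, $q\in\init_{(-w,w)}(C)$ if and only if $q=\lim_{m\to\infty}t_m\cdot p_m$ for some $p_m\in C$ and some $t_m\to 0^+$. I would then establish the containment
\[\Ch(\gr^U M)\subseteq \init_{(-w,w)}\bigl(\Ch(M)\bigr)\]
by picking a finite presentation of $M$, constructing a Bernstein filtration $F_\bullet M$ compatible with $U_\bullet M$, and exploiting the fact that $\gr^F\gr^U M$ is naturally a subquotient of $\gr^U\gr^F M$; since the latter is by construction the initial $\CC[x,\xi]$-module of $\gr^F M$ under the weight $(-w,w)$, its support is exactly $\init_{(-w,w)}(\Ch(M))$.

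For the second stage, observe that both $T^*_{\CC^{n-1}}\CC^n=V(x_n,\xi_1,\ldots,\xi_{n-1})$ and the zero section $T^*_{\CC^n}\CC^n=V(\xi_1,\ldots,\xi_n)$ are stable under the $(-w,w)$-action. Suppose, for contradiction, that some $q=(x_1',\ldots,x_{n-1}',0,\,0,\ldots,0,\eta)\in\Ch(\gr^U M)\cap T^*_{\CC^{n-1}}\CC^n$ with $\eta\neq 0$ lies near the origin. The containment above provides $p_m\in \Ch(M)$ and $t_m\to 0^+$ with $t_m\cdot p_m\to q$ under the $(-w,w)$-action, forcing the asymptotics $x_i(p_m)\sim t_m^{w_i}x_i'$ for $i<n$, $x_n(p_m)=o(t_m^{w_n})$, $\xi_i(p_m)=o(t_m^{-w_i})$ for $i<n$, and $\xi_n(p_m)\sim\eta\,t_m^{-w_n}$. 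Using now that $\Ch(M)$ is conical under the ordinary scaling of cotangent fibers, the point $p_m'\coloneqq(x(p_m),\,t_m^{w_n}\xi(p_m))$ still lies in $\Ch(M)$. A direct check using the hypothesis $0<w_i\leq w_n$ shows $p_m'\to(0,\ldots,0,\,0,\ldots,0,\eta)$: every $x$-coordinate vanishes in the limit because $w_i>0$; for $i<n$ one has $t_m^{w_n}\xi_i(p_m)=t_m^{w_n-w_i}(t_m^{w_i}\xi_i(p_m))\to 0$ since the prefactor is bounded ($w_n\geq w_i$) and the second factor tends to $0$; and $\xi_n(p_m')\to\eta$ by construction. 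Closedness of $\Ch(M)$ then places this limit in $\Ch(M)\cap T^*_{\CC^{n-1}}\CC^n\setminus T^*_{\CC^n}\CC^n$ arbitrarily close to the origin, contradicting the non-characteristic hypothesis on $i$.

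The main obstacle is the first stage: pinning down the inclusion $\Ch(\gr^U M)\subseteq\init_{(-w,w)}(\Ch(M))$ requires care with the bifiltration $(F,U)$ and the comparison between the two ways of forming the bigraded. The limit argument of the second stage is entirely elementary once the inclusion is in hand, and it makes manifest exactly why the hypothesis $0<w_i\leq w_n$ is needed: positivity of $w_i$ forces the $x$-coordinates to shrink under rescaling, while the upper bound $w_i\leq w_n$ keeps the exponents $t_m^{w_n-w_i}$ bounded as $t_m\to 0^+$, so that only the $\xi_n$-direction survives the limit.
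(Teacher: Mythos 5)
Your Stage~2 limit computation is correct, and it rediscovers exactly the inequality the paper uses: with $0<w_i\leq w_n$, the only way a monomial $x^a\partial^b$ of $(-w,w)$-weight $\geq w_nm$ and order $\leq m$ can exist is $a=0$, $b=me_n$, which is why $\partial_n^m$ survives into $\init_{(-w,w)}(P)$. However, Stage~1 is not merely a technical obstacle to be filled in later: the inclusion
\[ \Ch(\gr^U M)\subseteq\init_{(-w,w)}(\Ch(M)) \]
is \emph{false} in general. Take $n=2$, $w=(1,1)$, $P=x_1^2\partial_2^2+\partial_1$, and $M=D_2/D_2P$ with the quotient $V_\bullet D_2$-filtration. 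Then $\sigma(P)=x_1^2\xi_2^2$, so $\Ch(M)=V(x_1\xi_2)$, which is already $(-w,w)$-homogeneous; hence $\init_{(-w,w)}(\Ch(M))=V(x_1\xi_2)$, both scheme-theoretically and in your set-theoretic limit sense. But $\init_{(-w,w)}(P)=\partial_1$, so $\gr^U(M)=D_2/D_2\partial_1$ and $\Ch(\gr^U M)=V(\xi_1)\not\subseteq V(x_1\xi_2)$. The bifiltration heuristic also breaks down: while $\gr^F\gr^U M$ and $\gr^U\gr^F M$ are indeed isomorphic for image filtrations, the image filtration $F$ on $\gr^U M$ is generally \emph{not} a good order filtration, so its support does not compute $\Ch(\gr^U M)$ --- in the example above the image-filtered $\gr^F\gr^U M$ has support $V(x_1\xi_2)$, whereas $\Ch(\gr^U M)=V(\xi_1)$.

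The paper sidesteps this entirely by invoking \cite[Lem.~2.4.7]{htt}: non-characteristicity for a hypersurface lets one present $M$ as a quotient of $\bigoplus_j D_n/D_nP_j$ with each $P_j$ individually non-characteristic, strictly filtered. Applying $\gr^U$ keeps this a surjection onto $\gr^U(M)$, so one only needs to check non-characteristicity for the cyclic summands $D_n/D_n\init_{(-w,w)}(P_j)$, and there the claim reduces to the elementary monomial argument that your Stage~2 also contains. (Note the example above is no counterexample to the \emph{lemma} itself --- there $i$ is characteristic for $M$ near the origin, since $(0,0,0,\xi_2)\in\Ch(M)\cap T^*_{\CC^1}\CC^2$ --- but it kills the intermediate inclusion you rely on.) You would need to either find a version of your degeneration statement that genuinely uses the non-characteristicity hypothesis, or adopt the cyclic-module reduction as in the paper.
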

\begin{proof}
    Since $i$ is non-characteristic for $M$ and is the inclusion of a hypersurface, there exists \cite[Lem.~2.4.7]{htt} a surjection 
    \[ \bigoplus_j \frac{D_n}{D_n P_j} \twoheadrightarrow M, \]
    where $i$ is non-characteristic with respect to each $P_j$. We can (and do) arrange for this to be a strict filtered map such that each $D_n/D_nP_j$ has a filtration of the form
    \begin{equation*}%\label{eq:DDPfil-lem}
        U_\bullet(D_n/D_nP_j) = \frac{V_{\bullet-k_j}D_n}{D_nP_j\cap V_{\bullet-k_j}D_n}.
    \end{equation*}
    Applying $\gr^U$ to this surjection gives (up to shift in the grading) a surjection
    \begin{equation}
        \bigoplus_j \frac{D_n}{D_n\init_{(-w,w)}(P_j)} \twoheadrightarrow \gr^U(M).
    \end{equation}
    Since this is a surjection, the characteristic variety of the codomain is contained in the characteristic variety of the domain. Therefore, to prove that $i$ is non-characteristic for $\gr^U(M)$, it is enough to prove that $i$ is non-characteristic for each $D_n/D_n\init_{(-w,w)}(P_j)$.

    Let $m=\ord(P_j)$. The morphism $i$ being non-characteristic for $P_j$ in a neighborhood of the origin is equivalent to (see \cite[Ex.~2.4.4]{htt}) $\partial_n^m$ being a monomial of $P_j$. So, to show that $i$ is non-characteristic for $\init_{(-w,w)}(P_j)$  in a neighorhood of the origin, it suffices to show that $\partial_n^m$ is also a monomial of $\init_{(-w,w)}(P_j)$.
    
    Let $\ell$ be the $(-w,w)$-weight of $\init_{(-w,w)}(P_j)$. If $x^a\partial^b$ is a monomial of $P_j$ with weight $\ell$, then
    \begin{align*}
        w_n m &\leq \ell\\
            &= w_1b_1 + \cdots w_nb_n - w_1a_1- \cdots - w_na_n\\
            &\leq w_1b_1 + \cdots w_nb_n\\
            &\leq w_n(b_1+\cdots + b_n)\\
            &\leq w_nm,
    \end{align*}
    where in the second-to-last inequality we used that $w_i\leq w_n$ for all $i$. Thus, $\ell=w_nm$, which means that, indeed, $\partial_n^m$ is a monomial of $\init_{(-w,w)}(P_j)$.
\end{proof}

% We will need the following technical lemma in the proof of \Cref{hypsf-non-char-gr}.

% \begin{lemma}
    
% \end{lemma}

\begin{proposition}\label{hypsf-non-char-gr}
    Let $M$ be a coherent $D_n$-module equipped with a good $V_\bullet D_n$-filtration (with respect to a weight $w\in \RR^n$ with $0<w_i\leq w_n$ for all $i$). Let $i\colon \CC^{n-1}\hookrightarrow \CC^n$ be $(x_1,\ldots,x_{n-1})\mapsto (x_1,\ldots,x_{n-1},0)$. If $i$ is non-characteristic for $M$ (in a neighborhood of the origin), then the natural map
    \[\gr^U(i^*M)\twoheadrightarrow i^*\gr^U(M)\]
    is an isomorphism (in a neighborhood of the origin).
\end{proposition}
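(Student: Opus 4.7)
My plan is to identify the condition for the surjection to be an isomorphism with the \emph{strict compatibility} of multiplication by $x_n$ with the filtration $U_\bullet M$, and to deduce this strictness from the non-characteristic hypothesis via \Cref{hypsf-nc-implies-gr-nc}.

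First, unpacking both sides in the hypersurface case and using that $x_n$ has $(-w,w)$-order $-w_n$ (so acts on the graded $\gr^V D_n$-module $\gr^U(M)$ with a degree shift of $-w_n$), the two numerators coincide as $U_kM$, and the comparison reduces to the tautological inclusion
\[ x_n U_{k+w_n}M \subseteq (x_nM)\cap U_kM.\]
Hence the surjection is an isomorphism exactly when equality holds for every $k$, i.e.\ when $x_n\colon M\to M$ is \emph{strict} with respect to $U_\bullet$: whenever $x_n v \in U_kM$, the element $v$ itself lies in $U_{k+w_n}M$.

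Second, I would show that this strictness on $M$ is equivalent to injectivity of the graded multiplication $x_n\colon \gr^U_\ell M \to \gr^U_{\ell - w_n}M$ for every $\ell$. The standard induction on the minimal filtration degree of a hypothetical counterexample $v$ gives one direction: if $v \in U_\ell M$ has $x_n v \in U_{k}M$ with $k + w_n < \ell$, then $U_kM \subseteq U_{\ell - w_n - 1}M$, so $x_n \bar v = 0$ in $\gr^U_{\ell - w_n}M$, and injectivity forces $\bar v = 0$, contradicting the minimal choice of $\ell$.

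Finally, the required injectivity follows from \Cref{hypsf-nc-implies-gr-nc}, which states that $i$ is non-characteristic for $\gr^U(M)$. Non-characteristic restriction along a smooth hypersurface $V(x_n)$ is equivalent (via the Koszul resolution of $\shO_{V(x_n)}$ together with the Cauchy--Kowalevski--Kashiwara theorem) to the defining equation $x_n$ acting as a nonzerodivisor on the module, which supplies exactly the injectivity needed in the previous step. The main subtlety is the bookkeeping in the first step; once the isomorphism condition is correctly identified as strictness, the essential work has already been done in the preceding lemma.
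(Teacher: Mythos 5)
Your proof is correct, and it takes a genuinely different route from the paper's. The paper chooses a strict filtered presentation $\bigoplus_j D_n/D_nP_j \twoheadrightarrow M$ with each $P_j$ non-characteristic (via \cite[Lem.~2.4.7]{htt}), assembles a commutative diagram by applying $\gr^U$ and $i^*$ in the two orders, and concludes with the snake lemma after two diagram-chasing claims; the crucial local computation shows $\alpha\colon D_{n-1}^{\oplus m}\twoheadrightarrow D_{n-1}^{\oplus m'}$ with $m=m'$, and then invokes Noetherian-ness of the Weyl algebra. You instead identify the obstruction to the isomorphism \emph{intrinsically} as the failure of strictness of $x_n\colon M\to M$ with respect to $U_\bullet$, show that strictness follows from injectivity of $x_n$ on $\gr^U(M)$ by passing to a hypothetical counterexample at its lowest filtration level, and extract that injectivity from \Cref{hypsf-nc-implies-gr-nc} together with the Koszul computation of $Li^*\gr^U(M)$ and the Cauchy--Kowalevski--Kashiwara vanishing. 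Both proofs lean on \Cref{hypsf-nc-implies-gr-nc} as the key input, and both ultimately use the Kashiwara non-characteristic theorem (the paper via \cite[Th.~2.4.6(i)]{htt} for exactness of $i^*$), but your route replaces the presentation and the diagram chase with a short, conceptual reduction to strictness, and it makes transparent that the real content is \Cref{hypsf-nc-implies-gr-nc} plus ``non-characteristic $\Rightarrow$ the defining equation is a nonzerodivisor.''

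Three small imprecisions worth fixing, none of which affect the argument: (i) ``\emph{equivalent} to $x_n$ acting as a nonzerodivisor'' should read ``implies''---the converse direction is neither needed nor automatic for coherent $D$-modules; (ii) the precise isomorphism criterion is the containment $(x_nM)\cap U_kM \subseteq U_{k-1}M + x_nU_{k+w_n}M$ for all $k$, i.e.\ equality of denominators \emph{modulo} $U_{k-1}M$, which is a priori weaker than the strict equality you state---but since you prove the stronger statement this is harmless; (iii) in the induction/minimality step you should observe that any putative counterexample $v$ cannot lie in $\bigcap_\ell U_\ell M$ (such $v$ lies in every $U_{k+w_n}M$ trivially), so a minimal $\ell$ with $v\in U_\ell M\setminus U_{\ell-1}M$ really exists.
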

\begin{proof}
    Since $i$ is non-characteristic for $M$ and is the inclusion of a hypersurface, there exists \cite[Lem.~2.4.7]{htt} an exact sequence
    \begin{equation}\label{eq:dim2Pseq}
        0 \to K \to \bigoplus_j \frac{D_n}{D_n P_j} \to M \to 0,
    \end{equation}
    where $i$ is non-characteristic with respect to each $P_j$. We can (and do) arrange for this to be a strict filtered exact sequence such that each $D_n/D_nP_j$ has a filtration of the form
    \begin{equation}\label{eq:dim2DDPfil}
        U_\bullet(D_n/D_nP_j) = \frac{V_{\bullet-k_j}D_n}{D_nP_j\cap V_{\bullet-k_j}D_n}
    \end{equation}
    for some $k_j\in \ZZ$. By non-characteristicness, the sequence
    \begin{equation}
        0\to i^*K \to i^*\bigoplus_j \frac{D_n}{D_n P_j} \xrightarrow{f} i^*M \to 0
    \end{equation}
    is exact (see \cite[Th.~2.4.6(i)]{htt}), and an easy argument shows that $f$ is strict. Therefore, $\gr^U(f)$ is onto. However, there doesn't appear to be a reason why the inclusion of $i^*K$ into $i^*\bigoplus_j \frac{D_n}{D_n P_j}$ should be strict. Therefore, we replace $i^*K$ with the filtered kernel of $f$, which we'll denote $\ker(f)$; this replacement will be accounted for in Claim~\ref{claim:dash-exists} below. Note that $\ker(f)$ is just $i^*K$ equipped with the filtration induced by the inclusion into $i^*\bigoplus_j \frac{D_n}{D_n P_j}$. Then we get an exact sequence
    \begin{equation}
        0 \to \gr^U(\ker(f)) \to \gr^U\left(i^*\bigoplus_j \frac{D_n}{D_n P_j}\right) \xrightarrow{\gr^U(f)} \gr^U(i^*M) \to 0.
    \end{equation}

    On the other hand, \eqref{eq:dim2Pseq} is strict, and by \Cref{hypsf-nc-implies-gr-nc}, $i$ is non-characteristic for $\gr^U(M)$. So, we get an exact sequence
    \begin{equation}
        0\to i^*\gr^U(K) \to i^*\gr^U\left(\bigoplus_j \frac{D_n}{D_n P_j}\right) \xrightarrow{f} i^*\gr^U(M) \to 0.
    \end{equation}

    The remainder of the proof will involve studying the following diagram:
    % https://q.uiver.app/#q=WzAsMTAsWzAsMCwiMCJdLFsxLDAsIlxcZ3JeVShcXGtlcihmKSkiXSxbMiwwLCJcXGdyXlVcXGxlZnQoaV4qXFxiaWdvcGx1c19qIFxcZnJhY3tEXzJ9e0RfMiBQX2p9XFxyaWdodCkiXSxbMywwLCJcXGdyXlUoaV4qTSkiXSxbNCwwLCIwIl0sWzAsMSwiMCJdLFsxLDEsImleKlxcZ3JeVShLKSJdLFsyLDEsImleKlxcZ3JeVVxcbGVmdChcXGJpZ29wbHVzX2ogXFxmcmFje0RfMn17RF8yIFBfan1cXHJpZ2h0KSJdLFszLDEsImleKlxcZ3JeVShNKSAiXSxbNCwxLCIwLiJdLFswLDFdLFsxLDJdLFsyLDMsIlxcZ3JeVShmKSJdLFszLDRdLFs1LDZdLFs2LDddLFs3LDgsImciXSxbOCw5XSxbMSw2LCIiLDEseyJzdHlsZSI6eyJib2R5Ijp7Im5hbWUiOiJkYXNoZWQifSwiaGVhZCI6eyJuYW1lIjoiZXBpIn19fV0sWzIsNywiXFxhbHBoYSIsMCx7InN0eWxlIjp7ImhlYWQiOnsibmFtZSI6ImVwaSJ9fX1dLFszLDgsIlxcYmV0YSIsMCx7InN0eWxlIjp7ImhlYWQiOnsibmFtZSI6ImVwaSJ9fX1dXQ==
    \begin{equation}\label{eq:dim2gr-diagram}
    \begin{tikzcd}
    	0 & {\gr^U(\ker(f))} & {\gr^U\left(i^*\bigoplus_j \frac{D_n}{D_n P_j}\right)} & {\gr^U(i^*M)} & 0 \\
    	0 & {i^*\gr^U(K)} & {i^*\gr^U\left(\bigoplus_j \frac{D_n}{D_n P_j}\right)} & {i^*\gr^U(M) } & {0.}
    	\arrow[from=1-1, to=1-2]
    	\arrow[from=1-2, to=1-3]
    	\arrow["{\gr^U(f)}", from=1-3, to=1-4]
    	\arrow[from=1-4, to=1-5]
    	\arrow[from=2-1, to=2-2]
    	\arrow[from=2-2, to=2-3]
    	\arrow["g", from=2-3, to=2-4]
    	\arrow[from=2-4, to=2-5]
    	\arrow[dashed, two heads, from=1-2, to=2-2]
    	\arrow["\alpha", two heads, from=1-3, to=2-3]
    	\arrow[two heads, from=1-4, to=2-4]
    \end{tikzcd}
    \end{equation}
    Note that we already know that the right-hand square of the diagram commutes, and that both rows are exact. 
    
    \medskip
    \noindent\underline{Claim \refstepcounter{claim}\arabic{claim}.}\label{claim:middle-isom} The surjection $\alpha$ in \eqref{eq:dim2gr-diagram} is an isomorphism.

    \smallskip
    \noindent\emph{Proof of the claim.}
    Since $\gr^U$ and $i^*$ both commute with direct sums, it suffice to prove the case where there is a single $P$, i.e.: If $i$ is non-characteristic for the element $P\in D_n$, then the surjection 
    \[ \alpha\colon \gr^U(i^*(D_n/D_nP)) \twoheadrightarrow i^*\gr^U(D_n/D_nP)\]
    is an isomorphism. We recall from \eqref{eq:dim2DDPfil} that the filtration on $D_n/D_nP$ is of the form
    \[ U_\bullet(D_n/D_nP) = \frac{V_{\bullet-k}D_n}{D_nP\cap V_{\bullet-k}D_n}\]
    for some $k\in \ZZ$. Without loss of generality, we may assume that $k=0$.

    To begin with, notice that $\gr^U(D_n/D_nP) = D_n/D_n\init_{(-w,w)}(P)$. By assumption, $i$ is non-characteristic for $P$ and (by \Cref{hypsf-nc-implies-gr-nc}) for $\init_{(-w,w)}(P)$. Then by \cite[Ex.~2.4.4]{htt}, 
    \begin{align*}
        \gr^U(i^*(D_n/D_nP)) &\cong \gr^U(D_{n-1}^{\oplus m}) \cong D_{n-1}^{\oplus m}
        \intertext{and}
        i^*\gr^U(D_n/D_nP) &= i^*(D_n/D_n\init_{(-w,w)}(P))\cong D_{n-1}^{\oplus m'},
    \end{align*}
    where $m=\ord(P)$ and $m'=\ord(\init_{(-w,w)}(P))$. So, $\alpha$ is a surjective map from $D_{n-1}^{\oplus m}$ to $D_{n-1}^{\oplus m'}$. But the Weyl algebra is in particular Noetherian, so a surjective \emph{endomorphism} of a finitely-generated $D_{n-1}$-module is necessarily an isomorphism\footnote{This is easy to prove using Noetherian-ness. One applies Noetherian-ness to the sequence $\ker(\alpha)\subseteq \ker(\alpha^2)\subseteq \ker(\alpha^3)\subseteq\cdots$, where $\alpha^k$ is the $k$th iterate of $\alpha$. Surjectivity and the Snake Lemma then imply that $\alpha$ is an isomorphism.}. Thus, it remains to show that $m=m'$, so that $\alpha$ is indeed a (surjective) endomorphism.

    The morphism $i$ being non-characteristic for $P$ implies in particular (see \cite[Ex.~2.4.4]{htt}) that $P$ 
    has a monomial of the form $\partial_n^m$. 
    % %and $\init_{(-w,w)}(P)$ have a monomial of the form $\partial_y^m$ and $\partial_y^{m'}$, respectively. 
    % Let $\ell$ be the $(-w,w)$-weight of $\init_{(-w,w)}(P)$. If $x^a\partial^b$ is a monomial of $P$ with weight $\ell$, then
    % \begin{align*}
    %     w_n m &\leq \ell\\
    %         &= w_1b_1 + \cdots w_nb_n - w_1a_1- \cdots - w_na_n\\
    %         &\leq w_1b_1 + \cdots w_nb_n\\
    %         &= w_n(b_1+\cdots + b_n)\\
    %         &\leq w_nm,
    % \end{align*}
    % where in the last equality we used that $w_1=\cdots = w_n$. Thus, $\ell=w_nm$, which means that $\partial_n^m$ appears in $\init_{(-w,w)}(P)$.\footnote{Interestingly, this implies that $i$ is a posteriori non-characteristic for $\init_{(-w,w)}(P)$, at least in a neighborhood of the origin.} 
    The argument in the proof of \Cref{hypsf-nc-implies-gr-nc} implies that $\partial_n^m$ is also a monomial of $\init_{(-w,w)}(P)$. So, $m'=m$.
    \hfill/\!/\!/
    
    \medskip
    \noindent\underline{Claim \refstepcounter{claim}\label{claim:dash-exists}\arabic{claim}.} The dashed surjection in \eqref{eq:dim2gr-diagram} exists and makes the entire diagram commute.

    \smallskip
    \noindent\emph{Proof of the claim.}
    By the universal property of kernels, we get a filtered map $\varphi\colon i^*K\to \ker(f)$. This induces a graded map $\gr^U(\varphi)\colon \gr^U(i^*K)\to \gr^U(\ker(f))$. Moreover, $\alpha(\gr^U(\ker(f)))$ is in the kernel of $g$, so the universal property of kernels gives the dashed map, which we'll call $\psi$. On the other hand, we also know that the following diagram commutes:
    % https://q.uiver.app/#q=WzAsNSxbMCwwLCJcXGdyXlUoaV4qSykiXSxbMiwwLCJcXGdyXlVcXGxlZnQoaV4qXFxiaWdvcGx1c19qIFxcZnJhY3tEXzJ9e0RfMiBQX2p9XFxyaWdodCkiXSxbMSwwLCJcXGdyXlUoXFxrZXIoZikpIl0sWzIsMSwiaV4qXFxncl5VXFxsZWZ0KFxcYmlnb3BsdXNfaiBcXGZyYWN7RF8yfXtEXzIgUF9qfVxccmlnaHQpIl0sWzAsMSwiaV4qXFxncl5VKEspPVxca2VyKGcpIl0sWzQsMywiIiwwLHsic3R5bGUiOnsidGFpbCI6eyJuYW1lIjoiaG9vayIsInNpZGUiOiJ0b3AifX19XSxbMCw0LCIiLDAseyJzdHlsZSI6eyJoZWFkIjp7Im5hbWUiOiJlcGkifX19XSxbMCwyLCJcXGdyXlUoXFx2YXJwaGkpIl0sWzIsMSwiIiwyLHsic3R5bGUiOnsidGFpbCI6eyJuYW1lIjoiaG9vayIsInNpZGUiOiJ0b3AifX19XSxbMSwzLCJcXGFscGhhIl1d
    \[\begin{tikzcd}
    	{\gr^U(i^*K)} & {\gr^U(\ker(f))} & {\gr^U\left(i^*\bigoplus_j \frac{D_n}{D_n P_j}\right)} \\
    	{i^*\gr^U(K)=\ker(g)} && {i^*\gr^U\left(\bigoplus_j \frac{D_n}{D_n P_j}\right).}
    	\arrow[hook, from=2-1, to=2-3]
    	\arrow[two heads, from=1-1, to=2-1]
    	\arrow["{\gr^U(\varphi)}", from=1-1, to=1-2]
    	\arrow[hook, from=1-2, to=1-3]
    	\arrow["\alpha", from=1-3, to=2-3]
    \end{tikzcd}\]
    So, $\alpha\circ \gr^U(\varphi)$ maps $\gr^U(i^*K)$ surjectively onto $i^*\gr^U(K)$, which implies that $\alpha$ maps $\gr^U(\ker(f))$ surjectively onto $i^*\gr^U(K)$. Thus, since $\alpha$ restricted to $\gr^U(\ker(f))$ is just $\psi$, we see that $\psi$ is surjective. \hfill/\!/\!/

    \medskip
    Now apply the snake lemma along with the two claims to conclude.
\end{proof}

\subsection{Smooth subvarieties}
In this section we use \Cref{nonchar-flag} to extend the results of the previous section to smooth subvarieties of arbitrary codimension.

\begin{lemma}\label{nonchar-flag}
    Let $Z\subseteq X$ be a smooth subvariety of dimension $r$, $\shM$ a coherent $\shD_X$-module. If $Z$ is non-characteristic for $\shM$ near $p$, then there exists a sequence
    \[ Z=Z_r\subsetneq \cdots Z_n \subsetneq X\]
    of smooth subvarieties such that for all $k$, $\dim Z_k=k$ and $Z_k$ is non-characteristic for $\shM$ near $p$.
\end{lemma}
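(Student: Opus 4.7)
The plan is to build the flag inductively upward, one dimension at a time: starting from $Z_r = Z$, I construct $Z_{k+1}$ from $Z_k$ for each $r \le k < n$, and the flag terminates at $Z_n = X$. The only substantive ingredient I need is that non-characteristicness is an \emph{open condition} on smooth subvarieties: if a smooth $Z' \subseteq X$ is non-characteristic for $\shM$ at a point $q$, then it is non-characteristic in a neighborhood of $q$ in $Z'$. Granting this, the induction is essentially forced.

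For the inductive step, smoothness of $Z_k$ of dimension $k$ at $p$ lets me pick local coordinates $x_1,\ldots,x_n$ centered at $p$ on an affine neighborhood $U$ with $Z_k \cap U = V(x_{k+1},\ldots,x_n)$. I simply set $Z_{k+1} \coloneqq V(x_{k+2},\ldots,x_n) \cap U$, which is smooth of dimension $k+1$ and properly contains $Z_k$ near $p$. To check non-characteristicness at $p$, observe that the conormal spaces at $p$ fit as $T^*_p Z_{k+1}X \subseteq T^*_p Z_k X$ inside $T^*_p X$, as annihilators of the inclusion $T_p Z_k \subseteq T_p Z_{k+1}$. So the hypothesis
\[
    T^*_p Z_k X \cap \Ch(\shM) \subseteq T^*_X X
\]
immediately gives the analogous statement for $Z_{k+1}$ at $p$, and the openness claim then propagates this to a neighborhood of $p$ in $Z_{k+1}$.

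To establish the openness claim, let $C \coloneqq T^*_{Z_{k+1}} X \cap \Ch(\shM)$; this is a closed conic subset of the vector bundle $T^*_{Z_{k+1}} X$ over $Z_{k+1}$. Its projectivization $\mathbb{P}(C)$ is a closed subset of the projective bundle $\mathbb{P}(T^*_{Z_{k+1}} X) \to Z_{k+1}$, which is proper. Hence the image of $\mathbb{P}(C)$ in $Z_{k+1}$ is closed, and by construction this image is exactly the locus of points $q \in Z_{k+1}$ at which some \emph{nonzero} element of $T^*_q Z_{k+1} X$ lies in $\Ch(\shM)$, i.e., the locus where $Z_{k+1}$ fails to be non-characteristic. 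Since $p$ avoids this closed set by the previous paragraph, so does a Zariski neighborhood of $p$.

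The main (and essentially only) obstacle is setting up the properness-and-projectivization argument cleanly; the local-coordinate construction and the containment of conormal spaces are routine, and no serious new ideas beyond the openness of non-characteristicness are required.
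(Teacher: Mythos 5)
Your proof is correct and takes essentially the same approach as the paper's: build the flag via adapted local coordinates, observe that the conormal spaces at $p$ are nested, and invoke openness of the non-characteristic locus. The paper merely streamlines the presentation by fixing a single coordinate system adapted to $Z$ and defining all the $Z_k$ at once (rather than re-choosing coordinates inductively), and it cites openness as standard rather than spelling out your (correct) projectivization-and-properness argument.
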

\begin{proof}
    Let $x_1,\ldots,x_n$ be coordinates centered at $p$ such that $Z$ is cut out by $x_{r+1},\ldots,x_n$. Set 
    \[Z_k=V(x_{k+1},\ldots,x_n)\] 
    for $k=r,\ldots,n-1$. By definition of non-characteristic, and because non-characteristcness is an open property, $Z_k$ is non-characteristic for $\shM$ near $p$ if and only if
    \[(p;0,\ldots,0,\xi_{k+1},\ldots,\xi_n)\in \Ch(\shM)\implies \xi_{k+1}=\cdots=\xi_n=0.\]
    Therefore, if $Z$ is non-characteristic for $\shM$ near $p$, the same is true of each $Z_k$.
\end{proof}

\begin{lemma}\label{nc-implies-gr-nc}
    Let $M$ be a coherent $D_n$-module equipped with a good $V_\bullet D_n$-filtration (with respect to a weight $w\in \RR^n$ with $0<w_i\leq w_n$ for all $i$). Let $i\colon \CC^r\hookrightarrow \CC^n$ be $(x_1,\ldots,x_{r})\mapsto (x_1,\ldots,x_r,0)$. If $i$ is non-characteristic for $M$ in a neighborhood of the origin, then the same is true for $\gr^U(M)$.
\end{lemma}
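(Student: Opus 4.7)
My plan is to reduce to the hypersurface case (\Cref{hypsf-nc-implies-gr-nc}) by iterating along the non-characteristic flag produced by \Cref{nonchar-flag}. Concretely, by \Cref{nonchar-flag} I get a chain $\CC^r = Z_r \subsetneq Z_{r+1} \subsetneq \cdots \subsetneq Z_n = \CC^n$ of coordinate subspaces $Z_k = V(x_{k+1},\ldots,x_n)$ each non-characteristic for $M$ near the origin. The assertion I would prove by descending induction on $k$ is that every $Z_k$ is non-characteristic for $\gr^U(M)$; taking $k=r$ gives the lemma.

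The inductive step would go as follows. First, I would iterate \Cref{hypsf-non-char-gr} along the restrictions $\CC^n \twoheadleftarrow \CC^{n-1} \twoheadleftarrow \cdots \twoheadleftarrow Z_k$, using the inductive non-characteristic information at each stage, to identify $i_{Z_k}^* \gr^U(M)$ with $\gr^{U'}(i_{Z_k}^* M)$ on $Z_k \cong \CC^k$, where $U'$ is the filtration induced by the truncated weight $(w_1,\ldots,w_k)$. The embedding $Z_{k-1} \hookrightarrow Z_k$ is the coordinate hyperplane $V(x_k) \hookrightarrow \CC^k$ and is non-characteristic for $i_{Z_k}^* M$ because $Z_{k-1}$ is non-characteristic for $M$. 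Then \Cref{hypsf-nc-implies-gr-nc} gives that $V(x_k)$ is non-characteristic for $\gr^{U'}(i_{Z_k}^* M)$, and this pulls back along the identification to the conclusion that $Z_{k-1}$ is non-characteristic for $\gr^U(M)$ on $\CC^n$.

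The main obstacle is the weight hypothesis required by \Cref{hypsf-nc-implies-gr-nc}: the last entry of the weight must be a maximum. The original $w$ satisfies this by assumption, so the first step ($k=n$) works immediately. However, after truncating to $(w_1,\ldots,w_k)$, the new maximum might be attained at some $w_j$ with $j \le r$, i.e.\ at a coordinate we are committed to keeping, so the hypersurface lemma does not apply verbatim. I would address this by, before beginning the iteration, permuting the to-kill coordinates $\{x_{r+1},\ldots,x_n\}$ into a descending-weight order (this fixes $\CC^r$ and the flag setwise), and by handling the remaining cases—where the running maximum lies at a keep coordinate—via a direct inspection of the proof of \Cref{hypsf-nc-implies-gr-nc}, checking that the monomial-survival argument for $\partial_n^m$ still produces the required term in $\init_{(-w,w)}(P)$ under the actual ordering. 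This bookkeeping step, rather than the overall inductive scaffolding, is where I expect the real work to lie.
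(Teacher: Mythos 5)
Your overall strategy---descend along the flag from \Cref{nonchar-flag}, at each stage using \Cref{hypsf-non-char-gr} to identify $i_{Z_k}^*\gr^U(M)$ with $\gr^{U'}(i_{Z_k}^*M)$ and then \Cref{hypsf-nc-implies-gr-nc} to strip one more coordinate---is essentially what the paper's terse ``follows immediately from \Cref{hypsf-nc-implies-gr-nc} and \Cref{nonchar-flag}'' must mean, so the scaffolding matches.

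The weight-condition obstacle you flag is real, but your proposed repairs do not close it. Permuting the \emph{kill} coordinates $\{x_{r+1},\dots,x_n\}$ cannot help, because the obstruction lives in the \emph{keep} coordinates: after the first restriction the running weight on $\CC^{n-1}$ is $(w_1,\dots,w_{n-1})$, and the hypersurface lemmas need $\max_i w_i$ to sit at the last slot $w_{n-1}$. If some $w_j$ with $j\leq r$ exceeds every kill weight (e.g.\ $n=3$, $r=1$, $w=(2,1,2)$, which satisfies $0<w_i\leq w_n$), no reordering of $\{x_{r+1},\dots,x_n\}$ fixes this---and reordering the keep coordinates would change the subspace $Z_r$ itself. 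Your fallback of ``direct inspection'' of the monomial-survival argument also fails: the step $\ell\leq w_n\sum b_i$ in the proof of \Cref{hypsf-nc-implies-gr-nc} genuinely uses that the killed coordinate carries the maximal weight. Concretely, take $P=\partial_1^2+\partial_2^2$ with $w=(2,1)$: the hyperplane $V(x_2)$ is non-characteristic for $P$, but $\init_{(-w,w)}(P)=\partial_1^2$ has lost the $\partial_2^2$ term entirely, so the cyclic-module argument breaks. Thus neither your proposal nor the paper's one-line proof actually establishes the lemma under the stated hypothesis $0<w_i\leq w_n$ when $r<n-1$; what the flag argument supports is the stronger hypothesis $w_1=\cdots=w_n$ (exactly what \Cref{th:non-char-gr} assumes, and all that the proof of \Cref{th:Irr-is-0} ever uses, since there $U_\bullet$ comes from $V_\bullet^{\{p\}}\shD_X$, i.e.\ $w=(1,\dots,1)$). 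The cleanest resolution is to state and prove \Cref{nc-implies-gr-nc} under the constant-weight hypothesis rather than to try to rescue the permutation argument.
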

\begin{proof}
    This follows immediately from \Cref{hypsf-nc-implies-gr-nc} and \Cref{nonchar-flag}.
\end{proof}

\begin{theorem}\label{th:non-char-gr}
    Let $M$ be a coherent $D_n$-module equipped with a good $V_\bullet D_n$-filtration (with respect to a weight $w\in \RR^n$ with $w_1=\cdots=w_n>0$). Let $i\colon \CC^r\hookrightarrow \CC^n$ be $(x_1,\ldots,x_r)\mapsto (x_1,\ldots,x_r,0,\ldots,0)$. If $i$ is non-characteristic for $M$ (in a neighborhood of the origin), then the natural map
    \[\gr^U(i^*M)\twoheadrightarrow i^*\gr^U(M)\]
    is an isomorphism (in a neighborhood of the origin).
\end{theorem}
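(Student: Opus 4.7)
The plan is to prove the theorem by induction on the codimension $n-r$, with \Cref{hypsf-non-char-gr} serving as the one-step hypersurface case. When $n=r$, the map $i$ is the identity and there is nothing to check. For the inductive step, I would factor $i$ as $i = k\circ j$, where $k\colon \CC^{n-1}\hookrightarrow\CC^n$ is the codimension-one inclusion $(x_1,\ldots,x_{n-1})\mapsto(x_1,\ldots,x_{n-1},0)$ and $j\colon \CC^r\hookrightarrow\CC^{n-1}$ is the remaining coordinate inclusion. By \Cref{nonchar-flag}, $k$ is non-characteristic for $M$ near the origin, and since the hypothesis $w_1=\cdots=w_n>0$ certainly satisfies the weaker condition $0<w_i\leq w_n$ of \Cref{hypsf-non-char-gr}, that proposition yields the natural isomorphism $\gr^U(k^*M)\cong k^*\gr^U(M)$ near the origin.

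To close the induction I need to verify both that $j$ is non-characteristic for $k^*M$ near the origin and that the induced good $V_\bullet D_{n-1}$-filtration on $k^*M$ has equal positive weights $w_1,\ldots,w_{n-1}$. The latter is immediate from the construction of the induced filtration. For the former, I would use the standard description of the characteristic variety under a non-characteristic restriction: since $k$ is non-characteristic for $M$, any point $(p;\xi_1,\ldots,\xi_{n-1})\in\Ch(k^*M)$ must be the image of some $(p;\xi_1,\ldots,\xi_{n-1},\xi_n)\in\Ch(M)$ under the projection $T^*\CC^n|_{\CC^{n-1}}\to T^*\CC^{n-1}$. Combining this with the hypothesis that $i$ itself is non-characteristic for $M$ (so that any cotangent vector at the origin in $\Ch(M)$ with vanishing $\xi_1,\ldots,\xi_r$ components must be zero) immediately shows that $j$ is non-characteristic for $k^*M$. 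The inductive hypothesis then gives $\gr^U(j^*k^*M)\cong j^*\gr^U(k^*M)$, and chaining the two isomorphisms produces $\gr^U(i^*M)\cong i^*\gr^U(M)$.

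The main obstacle is this bookkeeping around the inductive descent, namely confirming that all the relevant hypotheses propagate through one step of codimension-one restriction. The assumption $w_1=\cdots=w_n$ (strictly stronger than \Cref{hypsf-non-char-gr}'s hypothesis of $0<w_i\leq w_n$) is used precisely because the equal-weight property is the one preserved by dropping the last coordinate, so that the hypersurface case of \Cref{hypsf-non-char-gr} remains directly applicable at every stage of the induction.
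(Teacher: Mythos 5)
Your proposal is correct and follows the same route as the paper, which simply states that the result follows immediately from \Cref{hypsf-non-char-gr} and \Cref{nonchar-flag}. You have merely unpacked the one-line argument: induct on codimension, drop the last coordinate via the hypersurface case, and verify that the non-characteristic and equal-weight hypotheses descend to $k^*M$ so the inductive hypothesis applies; your check that $j$ is non-characteristic for $k^*M$ via the containment of $\Ch(k^*M)$ in the image of $\Ch(M)$ is the standard step the paper leaves implicit.
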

\begin{proof}
    This follows immediately from \Cref{hypsf-non-char-gr} and \Cref{nonchar-flag}.
\end{proof}

%%%%%%%%%%%%%%%%%%%%%%%%%%%%%%%%%%%%%%%%%%%%%%
\section{Proof of \Cref{th:Irr-is-0}}\label{sec:proof}
%%%%%%%%%%%%%%%%%%%%%%%%%%%%%%%%%%%%%%%%%%%%%%
We will need one more technical \namecref{Irr-vs-restr} before proceeding with the proof of \Cref{th:Irr-is-0}. The proof of \ref{Irr-vs-restr.irr} requires derived category techniques from the theory of $D$-modules, but readers unfamiliar with these techniques may safely skip the proof.

\begin{lemma}\label{Irr-vs-restr}
    Let $\shM$ be a holonomic $\shD_X$-module, $Y$ a hypersurface in $X$ which contains $\Sing(\shM)$. Let $\{S_j\}$ be a stratification of $Y$ such that $\Ch(\shM)\cup \Ch(\shM(*Y))$ is contained in $T^*_XX\cup \bigcup_j \overline{T^*_{S_j}X}$. Choose $j$ such that $\dim S_j=\dim Y$, and let $p\in S_j$. If $i\colon C\hookrightarrow X$ is a smooth curve transverse to $S_j$ at $p$, then 
    \begin{enumerate}[label=\textnormal{(\alph*)}]
        \item\label{Irr-vs-restr.nc} $i$ is non-characteristic for $\shM$ and $\shM(*Y)$ near $p$, and 
        \item\label{Irr-vs-restr.irr} $\Irr_{\{p\}}(i^*\shM^\an) \cong \Irr_Y(\shM^\an)_p$.
    \end{enumerate}
\end{lemma}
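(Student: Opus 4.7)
For part (a), the strategy is to exploit that $S_j$ is top-dimensional in $Y$. Possibly refining the stratification to a Whitney stratification, the frontier condition forces that no other stratum $S_k$ of $Y$ can have $p$ in its closure, since otherwise $S_j\subseteq\overline{S_k}$ would imply $\dim S_k>\dim S_j=\dim Y$, contradicting $S_k\subseteq Y$. Hence on some open neighborhood $U$ of $p$,
\[ \bigl(\Ch(\shM)\cup \Ch(\shM(*Y))\bigr)\cap \pi^{-1}(U) \subseteq \bigl(T^*_XX \cup T^*_{S_j}X\bigr)\cap \pi^{-1}(U),\]
where $\pi\colon T^*X\to X$ is the projection. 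The transversality assumption at $p$ gives $T_pC + T_pS_j = T_pX$ (using $\dim C + \dim S_j = \dim X$), equivalently $T^*_{C,p}X \cap T^*_{S_j,p}X = 0$ in the fiber over $p$. Since transversality of $C$ to $S_j$ is an open condition along $C\cap U$, this upgrades to non-characteristicness of $i$ for both $\shM$ and $\shM(*Y)$ throughout a (possibly smaller) neighborhood of $p$.

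For part (b), the plan is to invoke the compatibility of $\Irr$ with non-characteristic pullback, a standard fact from Mebkhout's theory (cf.\ \cite{Meb04}). By (a), $i$ is non-characteristic for $\shM^\an$ near $p$, which yields a canonical isomorphism
\[ i^{-1}\Irr_Y(\shM^\an) \cong \Irr_{i^{-1}Y}(i^*\shM^\an).\]
Again using that $Y$ coincides with $S_j$ near $p$ and that $C$ is transverse to $S_j$ at $p$, we have $C \cap Y = \{p\}$ locally, so the right-hand side simplifies to $\Irr_{\{p\}}(i^*\shM^\an)$. Taking stalks at $p$ (which is trivial on the right, as it is already a complex supported at $\{p\}$) yields the desired isomorphism.

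The main obstacle is justifying the commutation of $\Irr$ with non-characteristic pullback. To make this self-contained, one would unpack the definition $\Irr_Y(\shM^\an) = k^{-1}\shRHom_{\shD_{X^\an}}(\shM^\an(*Y), \shO_{X^\an})$ (with $k\colon Y\hookrightarrow X$) and combine two ingredients: first, that for $i$ non-characteristic, $i^*(\shM^\an(*Y)) \cong (i^*\shM^\an)(*(C \cap Y))$, which follows because away from $Y$ the non-characteristic pullback commutes with localization; second, the derived form of the Cauchy--Kowalevski--Kashiwara theorem asserting $i^{-1}\shRHom_{\shD_{X^\an}}(\shN, \shO_{X^\an}) \cong \shRHom_{\shD_{C^\an}}(i^*\shN, \shO_{C^\an})$ for coherent holonomic $\shN$ with $i$ non-characteristic for $\shN$. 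Since $\codim_X Y = \codim_C\{p\} = 1$, no cohomological shift intervenes when base-changing $k$ to $j\colon \{p\}\hookrightarrow C$, and the two base changes assemble into the desired isomorphism.
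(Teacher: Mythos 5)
Your proof is correct and follows essentially the same route as the paper's: part (a) reduces the conormal-intersection condition to the fiber over $p$ via the stratification hypothesis and then applies transversality, and part (b) combines the compatibility of localization with non-characteristic pullback (Mebkhout's Prop.~3.5-5) with the Cauchy--Kowalevski--Kashiwara theorem. The only cosmetic difference is that you package part (b) as a general "commutation of $\Irr$ with non-characteristic pullback" before unpacking it into the same two ingredients the paper uses directly.
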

\begin{proof}
    \ref{Irr-vs-restr.nc} We only prove the non-characteristicness of $i$ with respect to $\shM$. The same argument works for $\shM(*Y)$. Since $C$ is transverse to $S_j$ at $p$, there are local coordinates $x_1,\ldots,x_n$ centered at $p$ such that $C$ is cut out by $x_2,\ldots,x_n$ and $S_j$ is cut out by $x_1$. Then, letting $x_1,\ldots,x_n,\xi_1,\ldots,\xi_n$ be the induced coordinates on $T^*X$ near $p$, we have
    \[ (T^*_{S_j}X)_p = V(\xi_2,\ldots,\xi_n)\]
    and
    \[ (T^*_CX)_p = V(\xi_1),\]
    which clearly intersect only at $0\in T^*_pX$. Hence, noticing that $\Ch(\shM)\cap T^*_pX = (T^*_{S_i}X)_p$ by hypothesis, we see that $i$ is non-characteristic for $\shM$ near $p$. 

    \smallskip
    \ref{Irr-vs-restr.irr} Recall from \eqref{eq:alt-Irr} that, up to cohomological shift,
    \[ \Irr_Y(\shM^\an) \cong k^{-1}\shRHom_{\shD_{X^\an}}(\shM^\an(*Y), \shO_{X^\an}),\]
    where $k\colon Y\hookrightarrow X$ is inclusion. Also,
    \[\Irr_{\{p\}}(i^*\shM^\an) \cong \shRHom_{\shD_{C^\an}}((i^*\shM^\an)(*p), \shO_{C^\an})_p.\]
    By \cite[Prop.~3.5-5]{Meb04}, $(i^*\shM^\an)(*p) \cong i^*(\shM^\an(*Y))$ in a neighborhood of $p$. Now use \ref{Irr-vs-restr.nc} and the Cauchy--Kowalevski--Kashiwara theorem (\cite[Th.~4.3.2]{htt}) to conclude.
    % Since the only part of the characteristic variety of $\shM$ which lives over $p$ is the part contributed by $T^*_{S_j}X$, and because
    % If $\shM$ is meromorphic, so that $\shM=\shM(*Y)$, then we are done by \ref{Irr-vs-restr.nc}. 
\end{proof}

By \cite[Th.~3.1.1]{Meb89}, the support of $\Irr_Y(\shM^\an)$ is a union of irreducible components of $Y$. Therefore, $\Irr_Y(\shM^\an)$ vanishes if and only if it vanishes near some/every point $p$ of each $\dim{Y}$-dimensional stratum $S_j$. So, let $S_j$ be a $\dim{Y}$-dimensional stratum $S_j$, and let $p\in S_j$. Choose a good $V_\bullet^{\{p\}}\shD_X$-filtration on $\shM$ near $p$. We need to show that $\rank(\gr^U(\shM)) = \rank(\shM)$ if and only if $\Irr_Y(\shM^\an)_p=0$.

Choose a smooth curve $i\colon C\hookrightarrow X$ transverse to $S_j$ at $p$. By \Cref{Irr-vs-restr} and \Cref{nc-implies-gr-nc}, $i$ is non-characteristic for both $\shM$ and $\gr^U(\shM)$ at $p$, so by the Cauchy--Kowalevski--Kashiwara theorem (\cite[Th.~4.3.2]{htt}), 
\begin{equation}\label{eq:rank-and-restr+gr}
    \rank(\shM) = \rank(i^*\shM) \qquad \text{and}\qquad \rank(\gr^U(\shM)) = \rank(i^*\gr^U(\shM)).
\end{equation}
By \Cref{th:non-char-gr} applied to $\shM$, $i^*$ commutes with $\gr^U$, so 
\begin{equation}\label{eq:rank,restr,gr}
    \rank(i^*\gr^U(\shM)) = \rank(\gr^U(i^*\shM)).
\end{equation}
Finally, \Cref{Irr-vs-restr} implies that $i^*\shM$ is regular at $p$ if and only if $\shM$ is regular at $p$; so, by Fuchs' theorem for modules (\Cref{th:Fuchs-for-mods}), $\rank(i^*\shM) = \rank(\gr^U(i^*\shM))$ if and only if $\shM$ is regular at $p$. Combining this with \eqref{eq:rank-and-restr+gr} and \eqref{eq:rank,restr,gr} proves the result.

\section{Nilsson solutions}\label{sec:Nilsson}
Recall from the introduction (or \cite[Th.~1.4.18]{sst}) that Fuchs' theorem says that in one dimension, regularity of $P$ is equivalent to the ability to write every multivalued solution as a linear combination of functions of the form $x^\lambda g(x)(\log x)^k$, where $\lambda \in \CC$ and $g(x)$ is holomorphic. Such functions are called Nilsson functions. There is an analogous notion of Nilsson functions (or rather Nilsson series) in higher dimensions. We now show a higher-dimensional analog of Fuchs' theorem which uses these higher-dimensional Nilsson series. %This result seems to already be known (possibly in a different form) to the experts. However, we were unable to find a reference.

The following definitions follow those in \cite[\S2]{nilssonAhyp}.

\begin{definition}\label{def:weight-vec}
    Let $I$ be a holonomic $D_n$-ideal. A vector $w\in \RR^n$ is called a \emph{generic weight vector} for the $I$ if there exists a strongly convex open cone $\calC\subseteq \RR^n$ containing $w$ such that for all $w'\in \calC$,
    \[ \init_{(-w',w')}(I) = \init_{(-w,w)}(I).\]
\end{definition}

Denote the dual cone of $\calC$ by $\calC^*$. 

\begin{definition}
    Let $I$ be a holonomic $D_n$-ideal. Let $w$ be a generic weight vector for $I$. A formal solution $\varphi$ of $I$ is called a \emph{basic Nilsson solution of $I$ in the direction of $w$} if it has the form
    \begin{equation}
        \varphi = \sum_{u\in C}x^{v+u}p_u(\log(x_1),\ldots,\log(x_n)),
    \end{equation}
    for some vector $v\in \CC^n$ such that
    \begin{enumerate}
        \item $C$ is contained in $\calC^*\cap \ZZ^n$, where $\calC$ is as in \Cref{def:weight-vec},
        \item the $p_u$ are polynomials, and there exists a $K\in \ZZ$ such that $\deg(p_u)<K$ for all $u\in C$,
        \item $p_0\neq 0$.
    \end{enumerate}
    The $\CC$-span of the basic Nilsson solutions of $I$ in the direction of $w$ is called the \emph{space of formal Nilsson solutions of $I$ in the direction of $w$} and is denoted $\calN_w(I)$. %The subspace consisting of the convergent solutions is denoted $\calN^{\mathrm{conv}}_w(I)$.
\end{definition}

\begin{theorem}\label{th:nilsson}
    Let $X$ be a smooth \emph{complete} variety. Let $\mathcal{M}$ be a meromorphic connection on $X$. Then $\shM$ is regular if and only if for all $p\in X$, there exists
    \begin{itemize}
        \item local coordinates $x_1,\ldots,x_n$ centered at $p$, and
        \item an ideal $I$ of $D_n$ such that $\mathcal{M}\cong \mathcal{D}_X/\mathcal{D}_X I$ on a neighborhood of $p$
    \end{itemize}
    such that for all generic weight vectors $w\in \RR^n$ of $I$, 
    \[ \dim_\CC \calN_w(I) = \rank(\shM).\]
\end{theorem}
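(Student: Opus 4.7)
The plan is to prove \Cref{th:nilsson} by bridging the Nilsson dimension $\dim_\CC\calN_w(I)$ with the rank criterion of \Cref{th:const-implies-rh} through the standard Gröbner-theoretic identity
\[\dim_\CC\calN_w(I) \;=\; \rank\bigl(D_n/D_n\cdot \init_{(-w,w)}(I)\bigr),\]
valid for any holonomic $I\subseteq D_n$ and any generic weight vector $w$ of $I$. This identity is essentially \cite[Cor.~2.5.13]{sst} in the $A$-hypergeometric setting, and its proof (constructing formal Nilsson series by extending solutions of the initial ideal, order by order in the cone $\calC^*$) carries over to the general holonomic case with only cosmetic changes; see also the discussion surrounding \cite[Th.~6.2]{nilssonAhyp}. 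With this bridge, \Cref{th:nilsson} becomes a dictionary-style reformulation of \Cref{th:const-implies-rh}.

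\textbf{Forward direction.} Assume $\shM$ is regular. I would apply \Cref{th:const-implies-rh}\ref{th:const-implies-rh.Dn} to produce, at each $p\in X$, local coordinates $x_1,\ldots,x_n$ centered at $p$ and an ideal $I\subseteq D_n$ with $\shM\cong \shD_X/\shD_X I$ near $p$ such that $\rank(D_n/D_n\cdot\init_{(-w,w)}(I)) = \rank(\shM)$ for \emph{every} $w\in \RR^n$. Specializing to generic $w$ and applying the identity immediately yields $\dim_\CC\calN_w(I) = \rank(\shM)$.

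\textbf{Backward direction.} Conversely, the hypothesis combined with the identity gives $\rank(D_n/D_n\cdot\init_{(-w,w)}(I)) = \rank(\shM)$ for every generic $w$. To invoke \Cref{th:const-implies-rh}\ref{th:const-implies-rh.irreds}, I need this equality at a general point $p$ of each irreducible component of the pole divisor for the weight $(1,\ldots,1)$ corresponding to the $V^{\{p\}}$-filtration. If $(1,\ldots,1)$ is generic for the chosen $I$, we are done. Otherwise, my fallback is to reduce to the one-dimensional setting: at $p$, choose a smooth curve $C$ transverse to the component, which is non-characteristic for $\shM$ by \Cref{Irr-vs-restr}\ref{Irr-vs-restr.nc}; then restriction to $C$ commutes with $\gr^U$ by \Cref{th:non-char-gr}, and the Nilsson hypothesis in a generic direction $w$ pointing along $C$ should descend to the analogous Nilsson count for $i^*\shM$, after which \Cref{th:Fuchs-for-mods} yields regularity of $i^*\shM$ at $p$ and hence, via \Cref{Irr-vs-restr}\ref{Irr-vs-restr.irr}, vanishing of $\Irr_Y(\shM)_p$ as required by \Cref{th:Irr-is-0}.

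\textbf{Main obstacle.} The delicate step is the backward direction: the ray $\RR_{>0}\cdot(1,\ldots,1)$ may lie on the boundary of several maximal cones of the Gröbner fan of $I$, where $\rank(D_n/D_n\cdot\init_{(-w,w)}(I))$ can strictly exceed $\rank(\shM)$ even when equality holds on the adjacent open cones — semicontinuity of holonomic rank \cite[Th.~2.2.1]{sst} gives only the lower bound $\rank(D_n/D_n\cdot\init_{(-w,w)}(I))\geq \rank(\shM)$, never the reverse. The heart of the argument is therefore either to exploit the freedom in the choice of coordinates and of $I$ to place $(1,\ldots,1)$ in the interior of a maximal Gröbner cone, or to carry out the curve-restriction above carefully enough that the descent of the Nilsson hypothesis to $i^*\shM$ is rigorous.
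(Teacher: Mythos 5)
Your overall plan matches the paper's: bridge $\dim_\CC\calN_w(I)$ to $\rank\bigl(D_n/D_n\,\init_{(-w,w)}(I)\bigr)$ via the Nilsson-series machinery of \cite[\S2.5]{sst} (specifically Th.~2.5.5 and Prop.~2.5.7), then invoke \Cref{th:const-implies-rh}. Your forward direction is exactly the paper's.

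In the backward direction you have correctly located the gap — the hypothesis controls only generic $w$, while \Cref{th:const-implies-rh}\ref{th:const-implies-rh.Dn} asks for all $w$ — but both of your proposed fixes are the wrong tool: there is no guarantee a coordinate change puts $(1,\dots,1)$ in a maximal Gröbner cone, and the curve-restriction route essentially re-proves \Cref{th:Irr-is-0}. What you are missing, and what the paper's terse citation of semicontinuity \cite[Th.~2.2.1]{sst} is actually accomplishing, is a \emph{second} application of semicontinuity, this time starting from a boundary initial ideal rather than from $I$. Fix any $w_0$ and pick a generic $w$ in an adjacent maximal Gröbner cone. By compositionality of initial forms, $\init_{(-w,w)}(I)=\init_{(-w',w')}\bigl(\init_{(-w_0,w_0)}(I)\bigr)$ for a suitable $w'$, so applying \cite[Th.~2.2.1]{sst} to the holonomic ideal $\init_{(-w_0,w_0)}(I)$ gives
\[
\rank\bigl(D_n/D_n\,\init_{(-w,w)}(I)\bigr) \;\geq\; \rank\bigl(D_n/D_n\,\init_{(-w_0,w_0)}(I)\bigr) \;\geq\; \rank(\shM),
\]
while your already-established equality for generic $w$ pins the left-hand side to $\rank(\shM)$, squeezing the boundary rank to $\rank(\shM)$ as well. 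Hence \Cref{th:const-implies-rh}\ref{th:const-implies-rh.Dn} holds for all weights and $\shM$ is regular. Your remark that semicontinuity "never" produces the reverse inequality is thus only true for a single direct application from $I$; applied from $\init_{(-w_0,w_0)}(I)$ it is precisely the reverse inequality you need, and it avoids all the genericity and restriction contortions.
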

\begin{proof}
    $(\Rightarrow)$ This is the content of \cite[\S2.5]{sst}. 
    
    \smallskip
    $(\Leftarrow)$ By \cite[Th.~2.5.5 and Prop.~2.5.7]{sst}, the rank of $\init_{(-w,w)}(I)$ is at least the dimension of $\calN_w(I)$, which by hypothesis equals the rank of $\shM$. Now apply \Cref{th:const-implies-rh} and the semi-continuity of rank \cite[Th.~2.2.1]{sst}.
\end{proof}

\begin{remark}
    Notice that \Cref{th:nilsson} says nothing about the convergence of the formal Nilsson solutions. However, \cite[Cor~2.4.16]{sst} implies that under the hypotheses of \Cref{th:nilsson}, every formal Nilsson solution is in fact convergent.
\end{remark}

\section{Regularity algorithm}\label{sec:algorithm}
We now rewrite \Cref{th:Irr-is-0} in the form of an algorithm (\Cref{alg:comps-of-irregularity}) to compute the support of the irregularity complex. An immediate consequence of this algorithm is an algorithm (\Cref{alg:irregularity}) to decide whether a meromorphic connection on a smooth complete variety is regular.

\begin{algorithm}[H]
    \caption{The support of the irregularity complex}
    \label{alg:comps-of-irregularity}
    \DontPrintSemicolon
    \LinesNotNumbered
    \KwIn{A holonomic $D$-module $\shM$ on a variety $X$, and a hypersurface $Y$ containing $\Sing\shM$}
    \KwOut{The set of irreducible components of the support of $\Irr_Y(\shM)$}
    \nl Compute $\rank(\shM)$.\;
    \nl Compute the irreducible components $\Lambda_1,\ldots,\Lambda_s$ of $\Ch(\shM)\cup \Ch(\shM(*Y))$ other than $T^*_XX$.\;
    \nl Let $Y_j$ be the projection of $\Lambda_j$ onto $X$.\;
    \nl \For{$j$ such that $\dim Y_j = \dim X - 1$}{
        \nl Choose $p \in Y_j\setminus \bigcup_{k\neq j} Y_k$.\label{alg:comps-of-irregularity.p}\;
        \nl Choose a presentation $\shM\cong \shD_X^{\oplus \ell}/\shN$ near $p$, and equip $\shM$ with the induced $V_\bullet^{\{p\}}\shD_X$-filtration.\;
        \nl Compute $\rank(\gr^U(\shM))$.\;
        \nl \If{$\rank(\gr^U(\shM)) \neq \rank(\shM)$}{
            \nl Append $Y_j$ to $\mathcal{C}$.\;
        }
    }
    \nl \Return{$\calC$}
\end{algorithm}

\begin{remark}
    When choosing the point $p$ in \Cref{alg:comps-of-irregularity}, it is not sufficient to only look at the complement in $Y_j$ of those $Y_k$ having codimension 1.
\end{remark}

\begin{remark}
    Step~\ref{alg:comps-of-irregularity.p} of \Cref{alg:comps-of-irregularity} can be accomplished symbolically using standard techniques: First, find hypersurfaces cutting out all the $Y_k$ for $j\neq k$, and use these to exhibit $Y_j\setminus\bigcup_{k\neq j} Y_k$ as an affine variety. Next, find a point in this affine variety via the \texttt{independentSets} command in \cite{M2}, computing a minimal prime, then extending the base field so that the computed minimal prime splits as an intersection of linear maximal ideals.
\end{remark}

\begin{question}
    Since the rank of $\gr^U(\shM)$ depends only on the irreducible component $Y_j$, the following question is therefore natural: Is there a way to compute this quantity ``directly'' from $\shM$ without requiring one to find a $p$?
\end{question}

\begin{algorithm}[H]
    \caption{Decide whether a meromorphic connection is regular}
    \label{alg:irregularity}
    \DontPrintSemicolon
    \LinesNotNumbered
    \KwIn{A meromorphic connection $\shM$ on a complete variety $X$}
    \KwOut{Whether or not $\shM$ is regular}
    \nl Compute $Y=\Sing(\shM)$.\;
    \nl Compute the support of $\Irr_Y(\shM)$ using \Cref{alg:comps-of-irregularity}.\;
    \nl \If{$\Supp(\Irr_Y(\shM)) = \emptyset$}{
        \Return{True}\;
    }
    \nl \Else{\Return{False}\;}
\end{algorithm}

\section{A divisor measuring irregularity}\label{sec:irrdiv}
The numbers in \Cref{th:Irr-is-0}, applied to meromorphic connections, can be collected into a divisor, which we are calling the \emph{irregularity divisor}.%\alert{I no longer think this is the \textit{correct} notion! Instead, one should let $\shM_\varepsilon$ be a generic flat deformation of $\shM$ and set $\dirrdiv(\shM)\coloneqq \lim_{\varepsilon\to 0}\irrdiv(\shM_\varepsilon)$. However, I'm not 100\% sure this new thing is well-defined. If it is, it should satisfy $\dirrdiv(\shE^f)=\operatorname{div}_\infty(f)$, where $\shE^f$ is the elementary irregular meromorphic connection given by $d+df$.}

\begin{theorem}\label{th:irrdiv-existence}
    Let $X$ be a smooth variety, and let $\shM$ be a meromorphic connection on $X$ with pole divisor $Y$. Let $\{Y_i\}$ be the irreducible components of $Y$, and let $p_i$ be a general point of $Y_i$. Let $U_{i,\bullet}\shM$ be a good $V^{\{p_i\}}_\bullet\shD_X$-filtration, and set
    \begin{equation}
        \irrdiv(\shM) \coloneqq \sum_{i} \irrmult(Y_i,\shM)\cdot Y_i \in \Div(X),
    \end{equation}
    where
    \begin{equation}
        \irrmult(Y_i,\shM)\coloneqq \rank(\shM) - \rank(\gr^U\shM).
    \end{equation}
    Then 
    \begin{enumerate}[label=\textnormal{(\alph*)}]
        \item\label{th:irrdiv-existence.indep} $\irrdiv(\shM)$ is independent of the choice of $p_i$ and $U_{i,\bullet}\shM$,
        \item\label{th:irrdiv-existence.eff} $\irrdiv(\shM)$ is effective,
        \item\label{th:irrdiv-existence.supp} $\Supp(\irrdiv(\shM)) = \Supp(\Irr_Y(\shM^\an))$,
        \item\label{th:irrdiv-existence.an} $\shM^\an$ is regular if and only if $\irrdiv(\shM)=0$, and
        \item\label{th:irrdiv-existence.compl} if $X$ is complete, then $\shM$ is regular if and only if $\irrdiv(\shM) = 0$.
        % \item\label{th:irrdiv-existence.proj} if $X$ is projective, then $\shM$ is regular if and only if the divisor class $[\irrdiv(\shM)]\in \Cl(X)$ vanishes.
    \end{enumerate}
\end{theorem}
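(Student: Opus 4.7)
The plan is to reduce each of (a)--(e) to the one-dimensional Fuchs criterion (\Cref{th:Fuchs-for-mods}) using the non-characteristic restriction machinery from \S\ref{sec: restr-and-gr}, following exactly the template of the proof of \Cref{th:Irr-is-0}. For each irreducible component $Y_i$ of the pole divisor $Y$, I would fix a Whitney stratification of $Y$ adapted to $\Ch(\shM)\cup\Ch(\shM(*Y))$, choose a general point $p_i$ in the top-dimensional stratum of $Y_i$, and let $\iota\colon C\hookrightarrow X$ be a smooth curve transverse to $Y_i$ at $p_i$. Then \Cref{Irr-vs-restr}\ref{Irr-vs-restr.nc} shows $\iota$ is non-characteristic for $\shM$, \Cref{nc-implies-gr-nc} shows it is non-characteristic for $\gr^U\shM$, and \Cref{th:non-char-gr} identifies $\gr^U(\iota^*\shM)$ with $\iota^*\gr^U\shM$. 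Combining these with the Cauchy--Kowalevski--Kashiwara theorem produces
\[ \rank(\shM) - \rank(\gr^U\shM) \;=\; \rank(\iota^*\shM) - \rank(\gr^U(\iota^*\shM)), \]
and \Cref{th:Fuchs-for-mods} identifies this common value with the rank of the formal irregular part of $\iota^*\shM$ at $p_i$.

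With this identity in hand, (a) and (b) are essentially immediate: the right-hand side is nonnegative (which is effectiveness (b)) and depends only on the analytic germ of $\shM$ at $p_i$, so it is independent of the good filtration $U_{i,\bullet}\shM$ and of the auxiliary curve $C$ (half of (a)). To complete (a), I would argue that the formal irregular rank along $Y_i$ is a constructible invariant that is locally constant on the codimension-one strata of the Whitney stratification; any two ``general'' points of $Y_i$ then lie in a common dense open of the top stratum and therefore produce the same value.

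For (c), I would apply \Cref{Irr-vs-restr}\ref{Irr-vs-restr.irr}, which gives $\Irr_Y(\shM^\an)_{p_i}\cong\Irr_{\{p_i\}}(\iota^*\shM^\an)$; this stalk is nonzero iff $\iota^*\shM$ is irregular at $p_i$, iff $\irrmult(Y_i,\shM)>0$. Since the support of $\Irr_Y(\shM^\an)$ is a union of irreducible components of $Y$ by \cite[Th.~3.1.1]{Meb89}, the support equality follows. For (d), I would use that $\shM^\an$ is a holomorphic connection (hence regular) away from $Y$, so all of its irregularity is concentrated along $Y$; analytic regularity of $\shM^\an$ is then equivalent to $\Irr_Y(\shM^\an)=0$ by Mebkhout's theory, which by (c) is equivalent to $\irrdiv(\shM)=0$. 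Part (e) follows by specializing (d) to complete $X$, where algebraic regularity of $\shM$ coincides with analytic regularity of $\shM^\an$ by \cite[Cor.~4.3-14]{Meb04}.

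The main obstacle is the $p_i$-independence portion of (a): after establishing the intrinsic formula ``$\irrmult(Y_i,\shM)$ equals the formal irregular rank of a transverse restriction,'' one still has to invoke the constructibility of this invariant along codimension-one strata of $Y_i$ to conclude that the value on the open stratum is well-defined. Everything else falls out directly from the one-dimensional Fuchs criterion \Cref{th:Fuchs-for-mods} and the non-characteristic restriction theorem \Cref{th:non-char-gr} proved in \S\ref{sec: restr-and-gr}.
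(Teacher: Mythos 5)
The proposal is correct and follows essentially the same route as the paper: reduce to dimension one via transverse curves and non-characteristic restriction (\Cref{Irr-vs-restr}, \Cref{nc-implies-gr-nc}, \Cref{th:non-char-gr}), and then apply the Fuchs criterion for modules (\Cref{th:Fuchs-for-mods}) to identify $\irrmult(Y_i,\shM)$ with the rank of the formal irregular part of the one-variable restriction. The paper compresses most of this into citations to \Cref{th:Irr-is-0} and its proof (for (a) and (c)) and then derives (d), (e) from (c), whereas you re-derive the reduction explicitly; the content is the same. One small note: you flag the $p_i$-independence in (a) as needing an additional constructibility argument. The paper leaves this implicit, leaning on the same ingredients used for \Cref{th:Irr-is-0}: since the irregular rank of the transverse restriction is constant on the open stratum $S_j$ of $Y_i$ (the irregularity complex is perverse with support a union of components of $Y$, so the stalk rank is constant on a dense open of each $Y_i$), any two general points give the same answer. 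Your instinct to invoke constancy along strata is precisely what fills this in, so the proofs agree in substance.
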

\begin{proof}
    \ref{th:irrdiv-existence.indep} This follows from the proof of \Cref{th:Irr-is-0}, noticing that the statement is true in dimension 1 by the proof of \Cref{th:Fuchs-for-mods}.

    \ref{th:irrdiv-existence.eff} Immediate.

    \ref{th:irrdiv-existence.supp} This is \Cref{th:Irr-is-0}.

    \ref{th:irrdiv-existence.an} Apply part \ref{th:irrdiv-existence.supp} to the definition of regularity in the analytic category.

    \ref{th:irrdiv-existence.compl} Apply part \ref{th:irrdiv-existence.supp} to the definition of regularity in the algebraic category.

    % \ref{th:irrdiv-existence.proj} Use that $\irrdiv(\shM)$ is effective, and that non-zero principal divisors are never effective on a projective variety.
\end{proof}

\begin{example}
    Continue the notation of \Cref{ex:elemirr}. Then $\irrdiv(\shE^f)$ is the reduced divisor of the divisor of poles of $f$, i.e.\ $\irrdiv(\shE^f) = (\operatorname{div}_\infty(f))_\mathrm{red}$.
\end{example}

\begin{example}\label{ex:gkz110011-ID}
    Continue the notation of \Cref{ex:gkz110011}. Then $\irrdiv(\shM) = 1\cdot V(x_2)$.
\end{example}

\begin{question}
    Motivated by \Cref{ex:gkz110011-ID}, it is natural to ask what $\irrdiv(\shM_A(\beta))$ is for an arbitrary GKZ system. %Can this be used to provide a third, substantially different proof that $\shM_A(\beta)$ is regular if and only if the row-span of $A$ contains the all-ones vector?
\end{question}

\begin{remark}
    One can show that the function $\irrdiv$ is additive and that it commutes with non-characteristic restriction. However, it does not in general commute with restriction to an arbitrary smooth subvariety $Z$, even if $Z$ is not contained in any irreducible component of the pole divisor of $\shM$. For instance, let $X=\PP^n$,  $i\colon Z\hookrightarrow X$ a smooth subvariety, $H\subseteq X$ a hyperplane not containing $Z$, and assume that the divisor $i^*H$ is not reduced. Consider the meromorphic connection $\shM$ on $X$ which is locally given by $\shE^{1/\ell}$, where $\ell$ is a local equation of $H$. Then $\irrdiv(i^*\shM) = (i^*H)_{\mathrm{red}}$, while $i^*\irrdiv(\shM) = i^*H$.
\end{remark}

% \section{Some follow-up questions}\label{sec:Qs}
% \begin{question}
%     Is there a 
% \end{question}

\bibliographystyle{amsalpha}
\bibliography{bib}
\end{document}